\documentclass[a4paper,11pt,oneside]{article}
\usepackage{graphicx}
\usepackage{amscd}
\usepackage{amsmath}
\usepackage{caption}
\usepackage{amsfonts}
\usepackage{amssymb}
\usepackage{amsthm}
\usepackage{mathrsfs}
\usepackage{multicol}
\usepackage{color}
\usepackage[english]{babel}
\usepackage[T1]{fontenc}
\usepackage{textcomp}
\usepackage[utf8]{inputenc}
\usepackage{enumitem}
\usepackage{indentfirst}
\usepackage{hyperref}
\usepackage[title,titletoc]{appendix}

\usepackage{romannum}

\makeatletter
\renewcommand*{\eqref}[1]{%
  \hyperref[{#1}]{\textup{\tagform@{\ref*{#1}}}}%
}
\makeatother

\newcommand{\N}{\mathbb N}

\newcommand{\ZZ}{\mathbb{Z}}
\newcommand{\RR}{\mathbb{R}}
\newcommand{\CC}{\mathbb{C}}

\newcommand{\DD}{\Delta}
\newcommand{\MM}{M}

\def\({\begin{eqnarray}}
\def\){\end{eqnarray}}
\def\[{\begin{eqnarray*}}
\def\]{\end{eqnarray*}}

\def\d{\mathrm{d}}
\def\dx{d}

\def\RI{\mbox{\Romannum{1}}}
\def\RII{\mbox{\Romannum{2}}}

\begin{document}
\pagenumbering{arabic}

	\theoremstyle{plain}
	\newtheorem{thm}{Theorem}[section]
	\newtheorem{cor}[thm]{Corollary}
	\newtheorem{lem}[thm]{Lemma}
	\newtheorem{prop}[thm]{Proposition}
	\theoremstyle{definition} \newtheorem{defn}{Definition}[section]
	
	\newtheorem{oss}[thm]{Remark}
	\newtheorem{ex}{Example}[section]

	\title{Consensus and flocking with transmission and reaction delays}
	\author{E. Continelli\footnote{Dipartimento di Matematica \textgravedbl Tullio Levi-Civita\textacutedbl, Universit\`{a} degli Studi di Padova, Via Trieste 63, 35121 Padova, Italy. Email: {\it elisa.continelli@math.unipd.it.}}, J. Haskovec\footnote{Mathematical and Computer Sciences and Engineering Division, King Abdullah University of Science and Technology, Thuwal 23955-6900, Kingdom of Saudi Arabia. Email: {\it jan.haskovec@kaust.edu.sa}}, C. Pignotti\footnote{Dipartimento di Ingegneria e Scienze dell'Informazione e Matematica, Universit\`{a} degli Studi dell'Aquila,	Via Vetoio, Loc. Coppito, 67100 L'Aquila, Italy. Email: {\it cristina.pignotti@univaq.it.}}}
	\maketitle
	

\maketitle

\begin{abstract}
We investigate consensus formation and flocking behavior in multi-agent systems subject to two distinct types of delays:
a transmission delay accounting for information exchange between agents,
and a reaction delay representing the processing time before agents adjust their states.
For a simplified linear two-agent system, we provide explicit insight into how these delays affect asymptotic stability.
We then derive sufficient conditions for asymptotic consensus and flocking in the general multi-agent setting with a nonlinear, globally positive influence function.
These conditions require the delays to be sufficiently small relatively to the initial data and the decay rate of the influence function.
The analysis is based on a Lyapunov functional approach combined with a Halanay-type inequality.
Our results establish rigorous conditions under which collective behavior emerges
in delayed multi-agent systems where both communication and reaction lags are non-negligible,
with applications to biological, social, and engineered systems.
\end{abstract}
\vspace{2mm}

\textbf{Keywords}: Asymptotic consensus, flocking, long-time behavior, delay.
\vspace{2mm}

\vspace{2mm}

\section{Introduction and main results}\label{sec:Intro}
We consider a group of $N\in\N$, $N\geq 2$, agents with time-dependent opinions represented by the vectors $x_i=x_i(t)\in\mathbb{R}^d$ for $i \in [N]$,
with $d\in\N$ the space dimension and the notation $[N]:= \{1,2,\dots,N\}$.
The agents seek consensus by adapting their opinions to the ones of their peers through a superposition of pairwise interactions.
The established Hegselmann-Krause model \cite{HK} assumes that the influence of the agent $i$ on agent $j$'s opinion
depends on the difference between their opinions $|x_i-x_j|$ through a nonlinear \emph{influence function} $\psi: \mathbb{R}^+ \to \mathbb{R}^+$,
where here and in the sequel $\mathbb{R}^+$ denotes the set of nonnegative real numbers.

For applications in biological or socio-economical systems it is natural to assume
that the transmission of information between the agents takes a non-negligible amount of time \cite{Smith, E3B}.
This means that agent $i$ at time $t\geq 0$ receives the opinion of agent $j$ in the form $x_j(t-\kappa)$.
For simplicity, we assume that the transmission delay $\kappa>0$ is a global constant,
taking the same value for all pairs of agents.
Moreover, we assume that the agents need a certain amount of time $\sigma\geq 0$ to process the received information
and initiate the appropriate reaction. Again, for simplicity we assume $\sigma\geq 0$ to be a global constant.
Then, the reaction of agent $i$ at time $t\geq 0$ is based on the states $x_j(t-\sigma-\kappa)$ of all other agents $j\neq i$,
and on its own state $x_i(t-\sigma)$.
Introducing the above assumptions into the Hegselmann-Krause model \cite{HK} and denoting $\tau:=\sigma+\kappa$,
we obtain the system
\begin{equation}\label{hkp}
   \frac{\d}{\d t}x_{i}(t) = \underset{j:j\neq i}{\sum}a_{ij}(t)(x_{j}(t-\tau)-x_{i}(t-\sigma)) \qquad \mbox{for } t > 0,\; i\in[N],
\end{equation}
where $\tau\geq \sigma$, and the symbol $\underset{j:j\neq i}\sum$ denotes summation over all $j\in[N] \setminus\{i\}$.
The communication rates $a_{ij} > 0$, $i,j\in[N]$, are defined as
\begin{equation}\label{commrates}
   a_{ij}(t):=\frac{1}{N-1}\psi(\lvert x_i(t-\sigma)-x_j(t-\tau)\rvert) \qquad \mbox{for } t\geq 0,\;  i,j\in[N],
\end{equation}
with the influence function $\psi: \mathbb{R}^+ \to \mathbb{R}^+$ assumed to be globally positive, continuous and nonincreasing.
The uniform positivity is obviously necessary for reaching global consensus (otherwise, clustering would take place in general, see, e.g., \cite{JM}).
The monotonicity is natural from the modeling point of view, since it can be expected that
agents with less difference in their opinions would influence each other more strongly.
However, it is not strictly necessary for our analysis to apply.
For the possible case of nonmonotone influence function $\psi$,
it can be replaced by its nonincreasing rearrangement
$\Psi(u) := \min_{s\in[0,u]} \psi(s)$  for $u\geq 0$.
Then all our results remain valid.
Moreover, without loss of generality (after an eventual rescaling) we assume that $\lVert\psi\rVert_{L^\infty(\mathbb{R}^+)} \leq 1$.
We then have
\(  \label{bound:a}
   a_{ij}(t) \leq \frac{1}{N-1} \qquad\mbox{for } t\geq 0, \; i\in[N].
\)
We prescribe the initial condition
\begin{equation}\label{incond}
	x_{i}(t)=x^{0}_{i}(t) \qquad \mbox{for } t\in [-\tau ,0],\; i\in[N],
\end{equation}
with continuous trajectories $x^0_i\in C([-\tau,0],\mathbb{R}^d)$.

Existence of unique global solutions for \eqref{hkp}--\eqref{incond} is obtained by the standard method of steps, see, e.g., \cite{Halanay, Hale, Smith}.
Given a global solution $\{x_i\}_{i\in[N]}$, we define the diameter $d_x = d_x(t)$,
\(  \label{def:dx}
   d_x(t) := \max_{i,j\in[N]}\lvert x_i(t)-x_j(t)\rvert \qquad \mbox{for } t\geq 0.
\)
\begin{defn}\label{def:consensus}
We say that the solution $\{x_i\}_{i\in[N]}$ of \eqref{hkp}--\eqref{incond} converges to a global consensus if
\[
    \lim_{t\to +\infty} d_x(t)=0.
\]
\end{defn}

To formulate our main result, we introduce the notation
\(   \label{def:DD0}
   \DD^0_x := \max_{i,j \in [N]} \max_{s, t\in [-\tau,0]}\lvert x^0_i(s) - x^0_j(t) \rvert.
\)
Moreover, for $k\in\N$, define
\(   \label{def:ZK}
     Z^k_\sigma := \frac{1}{2\sqrt{\sigma(1+\sigma)}} \left[ \left( (1+\sigma) + \sqrt{\sigma(1+\sigma)} \right)^{k+1} - \left( (1+\sigma) - \sqrt{\sigma(1+\sigma)} \right)^{k+1} \right].
\)
We remark that $Z^k_\sigma$ is a polynomial expression in $\sigma$, for any $k\in\N$.

\begin{thm}\label{thm:consensus}
Let $0 \leq \sigma \leq \tau$ and denote $K := \lceil 2\tau/\sigma \rceil$, i.e., the smallest integer such that $K\sigma \geq 2\tau$.
Assume that there exists $\beta>0$ such that
\(   \label{cond:beta}
   4\tau \leq \beta\left(2e^{-2\tau}-1\right),
\)
and
\(   \label{condition}
   4\tau+\beta(1-e^{-2\tau}) < \psi\left( \left(1 + \tau-\sigma + \beta^{-1}e^{2\tau} \right) \mathcal{Z}^K_\sigma \DD^0_x\right),
\)
with $\DD^0_x$ given by \eqref{def:DD0} and
\(   \label{def:Z}
    \mathcal{Z}^K_\sigma := Z^K_\sigma + Z^{K-1}_\sigma \beta \left( 1 - (1+2\tau) e^{-2\tau} \right),
\)
where $Z^K_\sigma$ and $Z_\sigma^{K-1}$ are given by \eqref{def:ZK}.
\\Then, the solution of \eqref{hkp}--\eqref{incond} reaches asymptotic consensus
in the sense of Definition~\ref{def:consensus}, with the diameter $d_x=d_x(t)$ 
decaying exponentially in time.
\end{thm}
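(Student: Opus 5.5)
Rewrite the system so that it becomes a standard Hegselmann--Krause-type system with a small perturbation. Since the agent $i$ reacts to $x_i(t-\sigma)$ rather than $x_i(t)$, write
\[
   x_j(t-\tau)-x_i(t-\sigma) = \bigl(x_j(t-\tau)-x_i(t-\tau)\bigr) + \bigl(x_i(t-\tau)-x_i(t-\sigma)\bigr),
\]
and control the second bracket by $\int_{t-\tau}^{t-\sigma}|\dot x_i(s)|\,\d s \le (\tau-\sigma)\max_s d_x(s)$, using \eqref{bound:a}. We aim to show that the diameter satisfies a delay differential inequality of Halanay type,
\[
   \frac{\d}{\d t} d_x(t) \le -\bigl(\psi(\mathcal{D})-4\tau-\beta(1-e^{-2\tau})\bigr)\, d_x(t) + (\text{terms involving } \max_{[t-2\tau,t]} d_x),
\]
valid as long as all pairwise distances entering $a_{ij}$ stay below a threshold $\mathcal{D}$. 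Condition \eqref{condition} is exactly the requirement that the decay coefficient be positive at $\mathcal{D}=(1+\tau-\sigma+\beta^{-1}e^{2\tau})\mathcal{Z}^K_\sigma\DD^0_x$.

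\textbf{Step 1 (a priori bound on the initial layer).} On $[0,K\sigma]\supset[0,2\tau]$ we derive, by the method of steps with step length $\sigma$, a growth bound on $\max_{i,j}|x_i-x_j|$. On each interval $[k\sigma,(k+1)\sigma]$ the right-hand side involves values on earlier intervals. This yields a two-term linear recursion whose characteristic roots are $(1+\sigma)\pm\sqrt{\sigma(1+\sigma)}$, so its solution is $Z^k_\sigma\DD^0_x$, matching \eqref{def:ZK}. Adding the correction from the Lyapunov term below gives $\mathcal{Z}^K_\sigma\DD^0_x$ as a bound on the diameter up to time $2\tau$.

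\textbf{Step 2 (Lyapunov functional).} Define
\[
   \mathcal{L}(t) = d_x(t) + \beta\int_{t-2\tau}^{t} e^{-(t-s)}\int_s^t |\dot x|\ \text{(or } d_x)\ \d\theta\, \d s .
\]
This is a weighted memory term absorbing the delay errors; its weight $e^{-2\tau}$ and the factor $\beta$ produce the constants in \eqref{cond:beta}, \eqref{condition}, and \eqref{def:Z}. We then compute the upper Dini derivative of $d_x$ along the pair $(i,j)$ realizing the maximum. We use the standard consensus estimate: $\sum_k a_{ik}(x_k-x_i)-\sum_k a_{jk}(x_k-x_j)$ projected on $x_i-x_j$ is bounded by $-\psi(\mathcal{D})d_x$. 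This holds because $\psi$ is nonincreasing and the weights are normalized, with the extra $x_i$-self terms handled via convex combination.

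\textbf{Step 3 (bootstrap and Halanay).} Using \eqref{cond:beta} to ensure that the memory term's derivative absorbs the delayed contributions, we get $\frac{\d}{\d t}\mathcal{L}\le -C\,\mathcal{L}$ on any interval where the distances stay below $\mathcal{D}$. Equivalently, a Halanay inequality $\dot{u}\le -a u + b\sup_{[t-2\tau,t]}u$ with $a>b$ gives exponential decay. A continuity argument then shows the threshold is never crossed. Indeed, $|x_i(t-\sigma)-x_j(t-\tau)|\le d_x(t-\sigma)+(\tau-\sigma)\sup d_x \le (1+\tau-\sigma)\sup d_x$, and $d_x \le \mathcal{L}\le \beta^{-1}e^{2\tau}\cdot(\ldots)$, which explains the factor $(1+\tau-\sigma+\beta^{-1}e^{2\tau})$. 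Hence the bound persists for all time, and $d_x$ decays exponentially.

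\textbf{Main obstacle.} I expect the main difficulty to be Step 2: choosing the memory functional so that the cross term $x_i(t-\tau)-x_i(t-\sigma)$ and the use of delayed states in the weights combine into exactly the constants $4\tau$ and $\beta(1-e^{-2\tau})$. I would first try the functional with the kernel $e^{-(t-s)}$ over a window of length $2\tau$, then tune it.
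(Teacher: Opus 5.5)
There is a genuine gap, and it lies exactly where you locate the ``main obstacle''. The estimate you start from, $\int_{t-\tau}^{t-\sigma}|\dot x_i(s)|\,\d s\le(\tau-\sigma)\max_s d_x(s)$, is false. \eqref{bound:a} only yields $|\dot x_i(s)|\le\max_j|x_j(s-\tau)-x_i(s-\sigma)|$, which is a \emph{cross-time} difference, and the equal-time diameter does not control it. For a counterexample, give all agents the same nonconstant initial trajectory. Then $x_i\equiv c$ for all $i$, so $d_x\equiv 0$. Yet $\dot c(t)=\psi(|c(t-\sigma)-c(t-\tau)|)\,(c(t-\tau)-c(t-\sigma))$ is in general nonzero for $\sigma<\tau$; this is the non-conservation of the mean. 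The same example refutes the bound $|x_i(t-\sigma)-x_j(t-\tau)|\le(1+\tau-\sigma)\sup d_x$ on which your bootstrap rests. It also shows that Step~1 cannot be a recursion for $\max_{i,j}|x_i-x_j|$. It must instead be run on cross-time quantities such as $\max|x_j(t)-x_i(s)|$ with $t\in[-\tau,K\sigma]$, $s\in[(K-1)\sigma,K\sigma]$. What is actually true is only the implicit estimate $\max_i|\dot x_i(t)|\le d_x(t-\tau)+\int_{t-\tau}^{t-\sigma}\max_i|\dot x_i(s)|\,\d s$, and this does not close in terms of $d_x$.

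For this reason your ``(or $d_x$)'' is not a free choice. The memory term must be built from $\max_i|\dot x_i|$, i.e.\ $F(t)=d_x(t)+\beta\int_{t-2\tau}^t e^{-(t-s)}\int_s^t\max_i|\dot x_i(r)|\,\d r\,\d s$. Its derivative contains $-\beta e^{-2\tau}\int_{t-2\tau}^t\max_i|\dot x_i|$, and this term has to absorb two contributions:
\begin{itemize}
\item the error $4\tau\int_{t-2\tau}^t\max_i|\dot x_i|$ that arises when $\dot d_x$ is estimated, by replacing $x_j(t-\tau)$ and $x_i(t-\sigma)$ with current positions and applying the implicit estimate;
\item the integral produced by applying the implicit estimate to the positive term $\beta(1-e^{-2\tau})\max_i|\dot x_i(t)|$.
\end{itemize}
Requiring this absorption is exactly \eqref{cond:beta}. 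The remainder $\beta(1-e^{-2\tau})d_x(t-\tau)$ becomes the $\gamma$-term of a \emph{delayed} Halanay inequality; one never obtains $\dot F\le -CF$.

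Likewise, the factor $\beta^{-1}e^{2\tau}$ in \eqref{condition} does not come from ``$\mathcal L\le\beta^{-1}e^{2\tau}(\dots)$''. It comes from dominating the leftover double integral of $\max_i|\dot x_i|$ by the memory part of $F$, which gives $|x_j(t-\tau)-x_i(t-\sigma)|\le d_x(t-\tau)+\int_{t-\tau}^{t-\sigma}d_x(s-\tau)\,\d s+\beta^{-1}e^{2\tau}F(t-\sigma)$. Hence the continuity argument must be run on $F$, with threshold $(\mathcal Z^K_\sigma+\varepsilon)\DD^0_x$, not on $d_x$.

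Your overall architecture matches the paper's: an initial-layer recursion, an exponential kernel on a window of length $2\tau$, a Halanay inequality, and a bootstrap. But the ingredient that makes it close is missing, and in its place you use a false inequality.
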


Let us note that assumptions \eqref{cond:beta}--\eqref{condition} represent, for a fixed initial datum $x^0\in C([-\tau,0])^{Nd}$, a smallness
condition on $\tau$.
Indeed, a simple calculation reveals that \eqref{cond:beta} can only hold if $\tau<\frac{\ln 2}{2}$.
Moreover, for the special case $\psi:=1$, for which \eqref{hkp} represents a linear consensus problem,
it is meaningful to choose the smallest possible $\beta>0$ in \eqref{cond:beta}, which is
\[
   \beta := \frac{4\tau}{2e^{-2\tau}-1}.
\]
 Condition \eqref{condition} then becomes
\[
    \frac{4\tau e^{-2\tau}}{2e^{-2\tau}-1} < 1,
\]
which can be resolved to $\tau < \frac12 \left( 1 - W\left(\frac{e}{2}\right)\right) \approx 0.157$,
where $W(\cdot)$ is the Lambert $W$-function,
i.e., $W(s) e^{W(s)} = s$ for $s\geq 0$.
Let us note that in this case the flocking condition
does not depend on the initial datum \eqref{incond}, as can be expected for a linear problem
due to its scale invariance.
For a general influence function $\psi$ and a fixed initial datum $x^0$, we note that \eqref{cond:beta}--\eqref{condition} is
satisfied for $\sigma=\tau=0$ with an arbitrary $\beta>0$ if $\psi$ is globally positive, which we assume.
Then, by continuity, it is also satisfiable for small enough values of $0\leq\sigma\leq\tau$.

The proof of Theorem~\ref{thm:consensus} will be carried out in Section~\ref{sec:consensus},
based on construction of a Lyapunov functional.
The literature on asymptotic consensus under the presence of delay in discrete agent models
is extensive. We refer to \cite{CPP, CCP:25, CP:25, CC:23, CC:24, H:BullLon:21, H:SIADS:21, H:ProcAMS:21, H:MMAS:22, H:PDEA:25, Lu, PP:23, Seuret, E6, Zhou-Li}
and references therein.
However, to our best knowledge, no results exist for the case of transmission-reaction delay,
i.e., $0<\sigma<\tau$, which we study in this paper.
Let us note that the analysis is complicated by the fact that \eqref{hkp} does not preserve
the mean $\sum_{i\in[N]} x_i(t)$ when $\sigma\neq\tau$.
Moreover, no a-priori bound on $\max_{i\in[N]} |x_i(t)|$ is available.

In the second part of this paper we extend our analysis to the second-order version of \eqref{hkp},
i.e., the Cucker-Smale model for the description of flocking phenomena \cite{CS1, CS2}.
Here the state of the agents is given by their positions $x_i=x_i(t)\in\mathbb{R}^d$ and velocities $v_i=v_i(t)\in\mathbb{R}^d$
in the $d$-dimensional space, subject to the dynamics
\(
    \label{CS1}
   \frac{\d}{\d t} x_i(t) &=& v_i(t),\\
   \label{CS2}
   \frac{\d}{\d t} v_i(t) &=& \underset{j:j\neq i}{\sum}a_{ij}(t)(v_{j}(t-\tau)-v_{i}(t-\sigma)).
\)
The communication rates $a_{ij} > 0$, $i,j\in[N]$, are defined in \eqref{commrates}
and the influence function $\psi: \mathbb{R}^+ \to \mathbb{R}^+$ is again
assumed to be globally positive, continuous and nonincreasing.
Moreover, without loss of generality we impose $\lVert\psi\rVert_{L^\infty(\mathbb{R}^+)} \leq 1$.
The system \eqref{CS1}--\eqref{CS2} is equipped with the initial datum
\(  \label{CS:incond}
	x_{i}(t)=x^{0}_{i}(t), \; v_i(t)=v^0_i(t) \qquad \mbox{for } t\in [-\tau ,0],\; i\in[N],
\)
with trajectories $x^0_i\in C^1([-\tau,0],\mathbb{R}^d)$, $v^0_i\in C([-\tau,0],\mathbb{R}^d)$ such that
\(   \label{CS:incomp}
   \frac{\d}{\d t} x_i(t) = v_i(t) \qquad \mbox{for } t\in [-\tau ,0],\; i\in[N].
\)

\begin{defn}\label{def:flocking}
We say that the solution $\{x_i, v_i\}_{i\in[N]}$ of \eqref{CS1}--\eqref{CS:incond} exhibits asymptotic flocking if
\[
   \sup_{t\geq0} d_x(t) < +\infty\qquad \mbox{and}\qquad \lim_{t\to +\infty} d_v(t)=0,
\]
with the position diameter $d_x=d_x(t)$ given by \eqref{def:dx} and the
velocity diameter $d_v=d_v(t)$ defined as
\(  \label{def:dv}
   d_v(t) := \max_{i,j\in[N]}\lvert v_i(t)-v_j(t)\rvert \qquad \mbox{for } t\geq 0.
\)
\end{defn}

To formulate our main result, we introduce the notation
\(   \label{def:DDv0}
   \DD^0_v := \max_{i,j \in [N]} \max_{s, t\in [-\tau,0]}\lvert v^0_i(s) - v^0_j(t) \rvert.
\)

\begin{thm}\label{thm:flocking}
Let $0 \leq \sigma \leq \tau$ and denote $K := \lceil 2\tau/\sigma \rceil$, i.e., the smallest integer such that $K\sigma \geq 2\tau$.
Assume that there exists $\beta>0$ such that
\(   \label{CS:cond:beta}
   4\tau \leq \beta\left(2e^{-2\tau}-1\right),
\)
and $C>0$ such that
\( \label{CS:condition}\begin{split}
	e^{C\tau}(4\tau e^{C\tau} &+ \beta(1-e^{-2\tau})) + C \\
	&\leq \psi\left( \DD^0_x + \mathcal{W}_\sigma^K \DD^0_v + \frac{e^{C\tau}}{C} \left(1 + \beta^{-1}e^{2\tau + C\sigma} + e^{C\tau} (\tau-\sigma)\right) \mathcal{Z}^K_\sigma \DD^0_v \right),
\end{split} \)
\noindent with $\mathcal{Z}^K_\sigma$ defined in \eqref{def:Z} and
\(   \label{def:W}
    \mathcal{W}_\sigma^K := Z^K_\sigma - (1+ \sigma)\left( Z^{K-1}_\sigma + 1\right) ,
\)
where $Z^K_\sigma$ and $Z^{K-1}_\sigma$ are given by \eqref{def:ZK}.
\\Then, the solution of \eqref{CS1}--\eqref{CS:incond} exhibits asymptotic flocking
in the sense of Definition~\ref{def:flocking}, with the diameter $d_v=d_v(t)$ 
decaying exponentially in time with rate $C$.
\end{thm}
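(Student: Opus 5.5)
The plan is to mirror the consensus argument of Theorem~\ref{thm:consensus}, now applied to the velocities, and to close it with a continuity (bootstrap) argument that keeps the position diameter bounded.

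\textbf{Step 1: rewrite the equation.} Fix a unit vector and project, so the velocity equation becomes scalar. I will write \eqref{CS2} as
\[
v_i'(t)=\sum_{j\neq i}a_{ij}(t)\bigl(v_j(t-\tau)-v_i(t)\bigr)+\sum_{j\neq i}a_{ij}(t)\bigl(v_i(t)-v_i(t-\sigma)\bigr).
\]
The second term is a reaction-delay perturbation. Using \eqref{bound:a}, it is bounded by $\int_{t-\sigma}^t|v_i'(s)|\,\d s$.

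\textbf{Step 2: a priori bound on the history.} I will bound the velocity differences on the window $[-\tau,2\tau]$ by $\mathcal{Z}^K_\sigma\DD^0_v$. This comes from the method of steps on intervals of length $\sigma$, iterating $K$ times. The resulting linear recursion, of the form $y_{k+1}=2(1+\sigma)y_k-y_{k-1}$, produces the quantities $Z^k_\sigma$ in \eqref{def:ZK}. The same iteration integrated in time gives the position contribution $\mathcal{W}^K_\sigma\DD^0_v$ on the initial window. These are the bounds already derived in the proof of Theorem~\ref{thm:consensus}, now applied to the velocities, so I take them as given.

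\textbf{Step 3: Lyapunov functional.} As in the consensus case, I will use a functional of the form
\[
\mathcal{L}(t)=\max_{i,j}|v_i-v_j|(t)+\beta\int_{t-2\tau}^t e^{-(t-s)}\int_s^t|v'|^2\ \text{(or a sup-norm analogue)}\,\d s .
\]
Along the solution, as long as $\psi(d_x)\ge\psi_*:=\psi(\cdot)$ on the right-hand side of \eqref{CS:condition} holds, it satisfies a Halanay-type inequality
\[
\mathcal{L}'(t)\le -\psi_*\mathcal{L}(t)+\bigl(4\tau+\beta(1-e^{-2\tau})\bigr)\sup_{[t-2\tau,t]}\mathcal{L}.
\]
Condition \eqref{CS:cond:beta} makes the $\beta$-term absorb the cross terms. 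Using the Halanay lemma with the exponential weight $e^{Ct}$, condition \eqref{CS:condition} yields $d_v(t)\le \mathcal{L}(t)\lesssim e^{-Ct}$ times the history bound. The constants have explicit forms:
\begin{itemize}
\item the factors $e^{C\tau}$ come from comparing the weighted suprema over delayed windows;
\item the factor $\beta^{-1}e^{2\tau+C\sigma}$ comes from recovering $d_v$ from $\mathcal{L}$ at delayed times;
\item the factor $e^{C\tau}(\tau-\sigma)$ comes from the mismatch $x_i(t-\sigma)-x_j(t-\tau)$ in the argument of $\psi$ in \eqref{commrates}.
\end{itemize}

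\textbf{Step 4: bootstrap.} Let $T^*$ be the supremum of the times $T$ such that the argument of $\psi$ in all $a_{ij}$ on $[0,T]$ stays below the bound $D_*$ inside $\psi$ in \eqref{CS:condition}. On $[0,T^*)$, Step 3 gives exponential decay of $d_v$. Integrating gives
\[
d_x(t)\le \DD^0_x+\mathcal{W}^K_\sigma\DD^0_v+\int_{0}^t d_v\le \DD^0_x+\mathcal{W}^K_\sigma\DD^0_v+\frac{e^{C\tau}}{C}(\cdots)\mathcal{Z}^K_\sigma\DD^0_v .
\]
The same integral controls $|x_i(t-\sigma)-x_j(t-\tau)|$, with the additional $(\tau-\sigma)\sup|v|$-difference term. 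Both bounds stay strictly below $D_*$, or equal to it with strictness recovered from the strict inequality in the Halanay step. By continuity this contradicts the finiteness of $T^*$, so $T^*=\infty$. Then $\sup_t d_x(t)<\infty$ and $d_v$ decays at rate $C$, which is flocking in the sense of Definition~\ref{def:flocking}.

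\textbf{Main obstacle.} The hardest part is Step 3. Because \eqref{hkp}-type dynamics with $\sigma\ne\tau$ do not preserve the mean and give no maximum principle, the reaction-delay term has to be controlled through $|v'|$. This makes the Lyapunov functional carry $\beta$-weighted memory terms. The delicate bookkeeping is matching these weights, with the exponential factor $e^{Ct}$ included, to obtain exactly the constants in \eqref{CS:condition}. I would first redo the consensus computation with the weight $e^{Ct}$ and track where each factor $e^{C\tau}$ appears.
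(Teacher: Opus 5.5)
Your overall architecture matches the paper's: a velocity functional with a $\beta$-weighted memory term, the $Z^k_\sigma$ history bounds on $[0,2\tau]$, a Halanay comparison, and a bootstrap on the argument of $\psi$. However, the bootstrap in Step~4 does not control the quantity it has to.

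You bound the equal-time diameter $d_x$ and then claim that $|x_i(t-\sigma)-x_j(t-\tau)|$ follows up to a $(\tau-\sigma)\sup$ velocity-difference correction. That step fails. One only has $|x_i(t-\sigma)-x_j(t-\tau)|\le d_x(t-\tau)+\bigl|\int_{t-\tau}^{t-\sigma}v_i(s)\,\mathrm{d}s\bigr|$, and the last term involves the velocity itself, not a difference. For $\sigma\neq\tau$ the mean velocity is not conserved, and a common shift $v_i\mapsto v_i+c$ moves the argument of $\psi$ by $c(\tau-\sigma)$. Also, $\max_i|v_i|$ appears nowhere in \eqref{CS:condition}. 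So this term is not bounded by velocity differences at time $t$; the drift it measures has to be tracked from the initial data.

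The missing idea (Lemma~\ref{lem:CS:xi_xj}) is to differentiate the mismatched difference itself: $\frac{\mathrm{d}}{\mathrm{d}t}|x_j(t-\tau)-x_i(t-\sigma)|\le|v_j(t-\tau)-v_i(t-\sigma)|$. By Lemma~\ref{lem:est_vi_vj}, this mismatched velocity difference is at most $d_v(t-\tau)+\int_{t-\tau}^{t-\sigma}d_v(s-\tau)\,\mathrm{d}s+\beta^{-1}e^{2\tau}G(t-\sigma)$. The last term comes from applying $|\dot v_i(s)|\le d_v(s-\tau)+\int_{s-\tau}^{s-\sigma}\max_k|\dot v_k|$ once more and absorbing the resulting double integral into the memory part of $G(t-\sigma)$. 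Integrating over $(2\tau,t)$ against $G(s)\le\mathcal{Z}^K_\sigma\Delta^0_v e^{-C(s-2\tau)}$ yields the three pieces of $\frac{e^{C\tau}}{C}\bigl(1+\beta^{-1}e^{2\tau+C\sigma}+e^{C\tau}(\tau-\sigma)\bigr)\mathcal{Z}^K_\sigma\Delta^0_v$. In particular, the $(\tau-\sigma)$ contribution accumulates in time (hence the $1/C$); it is not a pointwise correction.

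The absolute-velocity information sits in the starting value $|x_j(\tau)-x_i(2\tau-\sigma)|$, and this is where $\mathcal{W}^K_\sigma$ enters (Lemma~\ref{lem:CS:bndxtau}). Integrating the same inequality over $(\sigma,2\tau)$ gives $\Delta^0_x$ plus $\sigma\sum_{k=1}^{K-1}Z^k_\sigma\Delta^0_v=\mathcal{W}^K_\sigma\Delta^0_v$. Here $\Delta^0_x$, being defined with two independent times, encodes the initial drift. This is not a bound on $d_x$.

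Step~3, as written, would not give the stated condition either. Lumping the delayed terms into $(4\tau+\beta(1-e^{-2\tau}))\sup_{[t-2\tau,t]}\mathcal{L}$ loses too much. Comparison with $e^{-C(t-2\tau)}$ then requires $(4\tau+\beta(1-e^{-2\tau}))e^{2C\tau}+C<\psi_*$. That is strictly stronger than \eqref{CS:condition}, where the $\beta$-term carries only $e^{C\tau}$. The sup-form cannot do better, since $\mathcal{L}'=-\psi_*\mathcal{L}+c\,\mathcal{L}(t-2\tau)$ satisfies it.

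You must keep the structure of Lemma~\ref{lem:Gder}, namely $4\int_{t-\tau}^{t}G(s-\tau)\,\mathrm{d}s+\beta(1-e^{-2\tau})G(t-\tau)-(N-1)\underline{a}(t)G(t)$. Its Halanay rate $\Gamma$ solves $e^{\Gamma\tau}(4\tau e^{\Gamma\tau}+\gamma)+\Gamma=\eta$. Since the left-hand side is increasing, \eqref{CS:condition} gives $C\le\Gamma$, and a rate of at least $C$ is exactly what the $1/C$ bound in the bootstrap needs.

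Two smaller points. The memory term must be built from $\max_i|\dot v_i|$, because the $|v'|^2$ variant is not homogeneous with $d_v$. And no scalar projection is needed: the extremal-pair computation of $\frac12\frac{\mathrm{d}}{\mathrm{d}t}|v_i-v_k|^2$ works directly in $\mathbb{R}^d$.
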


Let us note that, for the case when $\sigma=\tau=0$, \eqref{CS:condition} reduces to
\[
   C \leq \psi\left( \DD^0_x + \frac{1}{C} (1 + \beta^{-1})  \right).
\]
Since \eqref{CS:cond:beta} obviously holds with any $\beta>0$,  \eqref{CS:condition} reduces further to
\[
   C < \psi\left( \DD^0_x + \frac{1}{C} \right),
\]
i.e., we need $\psi$ decaying slower than $1/s$ as $s\to+\infty$. In particular, any $\psi(s) := \frac{1}{(1+s)^\alpha}$ with $\alpha<1$ verifies the condition.
Let us remark that for the classical Cucker-Smale model (without delays), the sharp condition for unconditional flocking is that $\int^{+\infty} \psi(s) \d s = +\infty$.
Therefore, for $\sigma=\tau=0$, \eqref{CS:condition} is almost optimal, excluding only the case when $\psi(s) \approx 1/s$ as $s\to+\infty$.

The proof of Theorem~\ref{thm:flocking} will be given in Section~\ref{sec:flocking}.
Again, there exists a substantial amount of literature establishing sufficient
conditions for asymptotic flocking in the Cucker-Smale model under the presence of delay,
see, e.g., \cite{CL, CCP:25, C:23,  DH, EHS:SIAM:16, H:SIADS:21, H:JMAA:22, H:KRM:23, HM:KRM:20, HM:MBE:20, Liu-Wu, PT:18, RC:22, E6}.
However, to our best knowledge, no flocking results for the transmission-reaction case $\sigma\neq\tau$
appeared so far.

The paper is organized as follows.
In Section~\ref{sec:toy} we consider the maximally simplified version of \eqref{hkp} with two agents
and constant communication rates. We study the asymptotic stability of the trivial steady state
of the resulting equation.
In Section~\ref{sec:consensus} we develop the analysis of the asymptotic
consensus for system \eqref{hkp}--\eqref{incond} and provide proof of Theorem~\ref{thm:consensus}.
Finally, in Section~\ref{sec:flocking} we extend our results to establish asymptotic
flocking for system \eqref{CS1}--\eqref{CS:incond}, in particular, to prove Theorem~\ref{thm:flocking}.

\section{Toy model: $N=2$}\label{sec:toy}
\setcounter{equation}{0}
To gain insights into the dynamics of the model,
we consider \eqref{hkp} with two agents, $N=2$,
and with constant communication rates $a_{12} = a_{21} = 1$.
Denoting $u(t) := x_1(t)-x_2(t)$, we obtain the scalar delay differential equation
\(   \label{eq:u}
   \dot u(t) = - u(t-\tau) - u(t-\sigma).
\)
Let us investigate the asymptotic stability of the trivial steady state $u\equiv 0$
for \eqref{eq:u}.
We make the ansatz $u(t) = e^{\lambda t}$ with $\lambda\in\CC$,
which leads to the characteristic equation
\(   \label{eq:char}
   \lambda + e^{-\lambda\tau} + e^{-\lambda\sigma} = 0.
\)
As we look for values of $\tau$, $\sigma>0$ where the roots
cross the imaginary axis, we set $\lambda = \omega i$, with nonzero $\omega\in\RR$.
Taking the real and imaginary parts of \eqref{eq:char} yields
\(
   \label{eq:cos}
   \cos(\omega\tau) + \cos(\omega\sigma) &=& 0,\\
   \sin(\omega\tau) + \sin(\omega\sigma) &=& \omega,
   \label{eq:sin}
\)
which we further rewrite as
\(  \label{eq:ch1}
   2\cos\left( \omega\frac{\tau+\sigma}{2}\right) \cos\left( \omega\frac{\tau-\sigma}{2}\right) &=& 0, \\
   \label{eq:ch2}
   2\sin\left( \omega\frac{\tau+\sigma}{2}\right) \cos\left( \omega\frac{\tau-\sigma}{2}\right) &=& \omega.
\)
As we impose $\omega\neq 0$, \eqref{eq:ch2} implies $\cos\left( \omega\frac{\tau-\sigma}{2}\right) \neq 0$,
and \eqref{eq:ch1}, in turn,  $\cos\left( \omega\frac{\tau+\sigma}{2}\right) = 0$.
Consequently,
\(  \label{eq:m}
   (\tau+\sigma)\omega = (2m+1)\pi, \qquad\mbox{for some } m\in\ZZ.
\)
Then $\sin\left( \omega\frac{\tau+\sigma}{2}\right) = (-1)^m$,
and inserting this into \eqref{eq:ch2} yields
\(  \label{eq:cs}
   2 \cos\left( \omega\frac{\tau-\sigma}{2}\right) = (-1)^m \omega.
\)
Moreover, \eqref{eq:m} gives $(\tau-\sigma)\omega = (2m+1)\pi - 2\omega\sigma$,
and using the identity
\[
   \cos\left( (2m+1)\frac{\pi}2 - \omega\sigma\right) = \sin\left( (2m+1)\frac{\pi}2 \right) \sin(\omega\sigma) = (-1)^m \sin(\omega\sigma)
\]
in \eqref{eq:cs} finally yields $2\sin(\omega\sigma) = \omega$.
This is solvable for $\omega\neq 0$ if and only if $\sigma>1/2$.
Inserting into \eqref{eq:sin} gives $2\sin(\omega\tau) = \omega$
and the solvability condition $\tau>1/2$.
We conclude that whenever $\min\{\sigma,\tau\}\leq 1/2$,
 the characteristic equation \eqref{eq:char}
does not admit purely imaginary roots,
and, therefore, Hopf bifurcations cannot occur.
Consequently, the trivial solution $u\equiv 0$ of \eqref{eq:u}
is asymptotically stable.

We calculated the pure imaginary roots of \eqref{eq:char} numerically in Fig.~\ref{fig:hopf}.
The area below the curve marked as $m=0$ is the set of asymptotic stability
of the zero solution of \eqref{eq:u}. We observe that the curve indeed matches
the region $\min\{\sigma,\tau\}\leq 1/2$ closely. The black dot indicates the point
$\sigma=\tau=\pi/4$, which is the well known Hopf bifurcation point
for the equation $\dot u(t) = -2u(t-\pi/4)$.

\begin{figure}
\begin{center}
	\includegraphics[width=.55\textwidth]{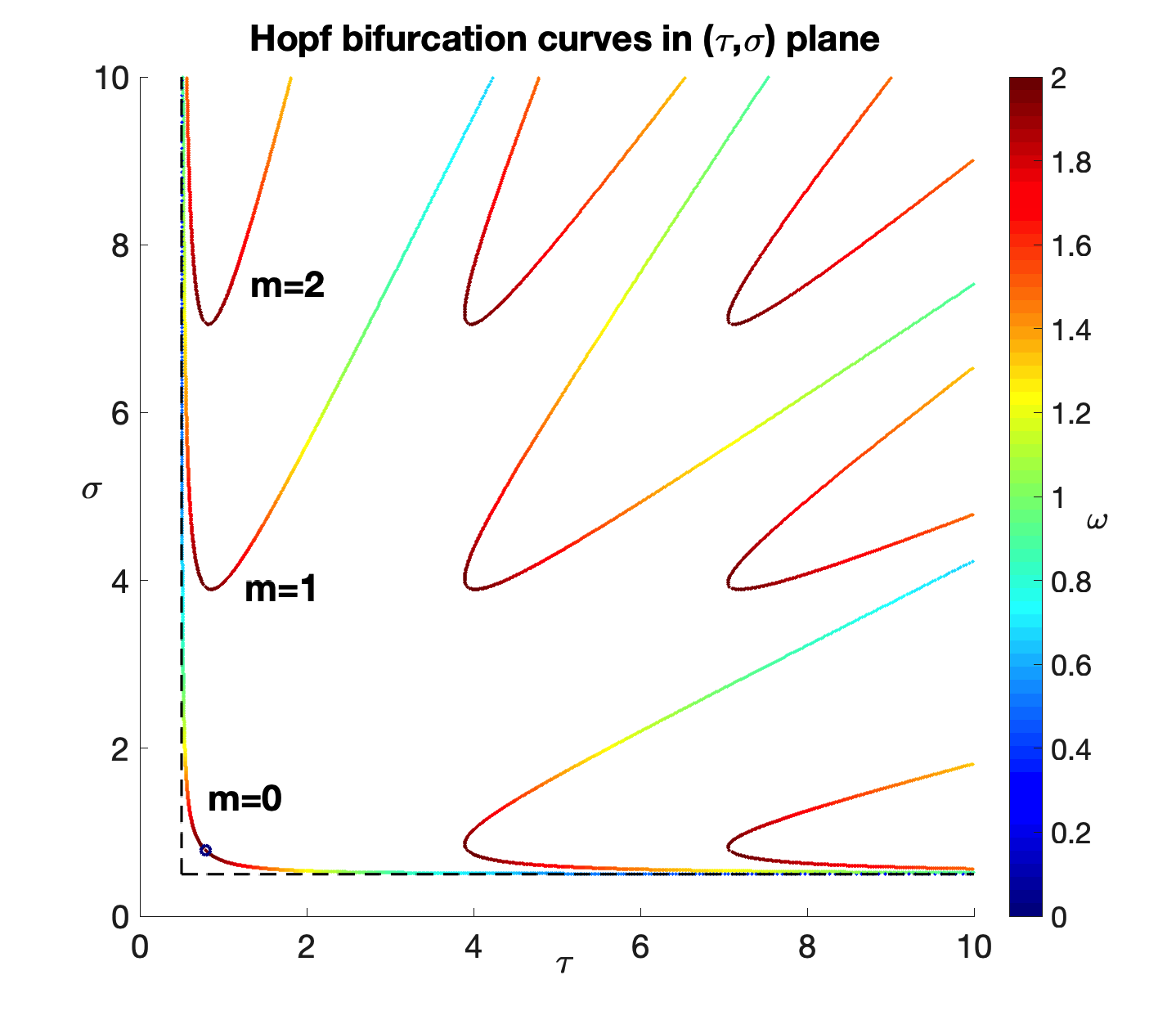}
\end{center}
\caption{Pure imaginary roots $\lambda = \omega i$ for the characteristic equation \eqref{eq:char}
colored by the value of $\omega\in (0,2]$,
obtained by numerical solution of the system \eqref{eq:cos}--\eqref{eq:sin}.
The black dashed lines demarcate the region where $\min\{\sigma,\tau\}\leq 1/2$.
The black circle marks the point $\sigma=\tau=\pi/4$.
}
\label{fig:hopf}
\end{figure}

\section{Asymptotic consensus in system \eqref{hkp}--\eqref{incond}}\label{sec:consensus}
\setcounter{equation}{0}

Throughout this section we denote $\{x_i\}_{i\in[N]}$
the global solution of \eqref{hkp}--\eqref{bound:a} with the initial datum \eqref{incond}. 

\subsection{Auxiliary results}\label{subsec:aux}
\begin{lem}\label{lem:aux1}
	For all $t\geq \tau$ it holds
	\begin{equation}\label{estonderiv}
		\max_{i\in[N]}\lvert \dot{x}_i(t)\rvert\leq d_x(t-\tau)+\int_{t-\tau}^{t-\sigma}\max_{i\in[N]} \lvert \dot{x}_i(s)\rvert \,\d s,
	\end{equation}
	with the diameter $d_x=d_x(t)$ defined in \eqref{def:dx}.
	\end{lem}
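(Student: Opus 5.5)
We work directly from the equation \eqref{hkp}, splitting the mismatch between the delayed states. Fix $t\geq\tau$ and $i\in[N]$. For each $j\neq i$ we write
\[
   x_j(t-\tau)-x_i(t-\sigma) = \bigl(x_j(t-\tau)-x_i(t-\tau)\bigr) + \bigl(x_i(t-\tau)-x_i(t-\sigma)\bigr).
\]
The first bracket compares two agents at the same time instant $t-\tau\geq 0$, so its norm is at most $d_x(t-\tau)$ by \eqref{def:dx}. For the second bracket we use the fundamental theorem of calculus. Since $t-\tau\geq 0$, the interval $[t-\tau,t-\sigma]$ lies in $[0,\infty)$, where $x_i$ is $C^1$ as a solution of \eqref{hkp}. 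This gives
\[
   \lvert x_i(t-\tau)-x_i(t-\sigma)\rvert \leq \int_{t-\tau}^{t-\sigma}\lvert\dot x_i(s)\rvert\,\d s \leq \int_{t-\tau}^{t-\sigma}\max_{k\in[N]}\lvert\dot x_k(s)\rvert\,\d s,
\]
where we used $\sigma\leq\tau$ so that the interval is correctly oriented.

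Next we insert these bounds into \eqref{hkp} and use \eqref{bound:a}, namely $0<a_{ij}(t)\leq \frac{1}{N-1}$. Since the sum runs over $N-1$ indices, we get
\[
   \lvert\dot x_i(t)\rvert \leq \sum_{j\neq i} a_{ij}(t)\Bigl( d_x(t-\tau) + \int_{t-\tau}^{t-\sigma}\max_{k}\lvert\dot x_k(s)\rvert\,\d s\Bigr) \leq d_x(t-\tau) + \int_{t-\tau}^{t-\sigma}\max_{k}\lvert\dot x_k(s)\rvert\,\d s.
\]
Taking the maximum over $i\in[N]$ yields \eqref{estonderiv}.

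There is no real obstacle in this argument. The one point requiring care is the restriction $t\geq\tau$: it guarantees that $d_x(t-\tau)$ is defined via \eqref{def:dx}, which is stated only for nonnegative times. It also guarantees that the integral involves only times where $x_i$ is differentiable, rather than the merely continuous initial datum. A second point is that $s\mapsto\max_k\lvert\dot x_k(s)\rvert$ is continuous, hence integrable, on $[0,\infty)$.
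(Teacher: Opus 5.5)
Your proposal is correct and follows essentially the same route as the paper. It uses the same splitting through $x_i(t-\tau)$, the bound $d_x(t-\tau)$ for the same-time difference, the fundamental theorem of calculus for $\lvert x_i(t-\tau)-x_i(t-\sigma)\rvert$ using $t-\sigma\geq t-\tau\geq 0$, and $\sum_{j\neq i}a_{ij}(t)\leq 1$ from \eqref{bound:a}.
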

\begin{proof}
        From \eqref{hkp} we have, for any $i\in[N]$ and $t\geq \tau$,
	$$\begin{array}{l}
		\vspace{0.3cm}\displaystyle{\lvert\dot{x}_i(t)\rvert
                  \leq \underset{j:j\neq i}{\sum}a_{ij}(t)\lvert x_{j}(t-\tau)-x_{i}(t-\sigma)\rvert}\\
		\vspace{0.3cm}\displaystyle{\hspace{1.2cm}\leq \underset{j:j\neq i}{\sum}a_{ij}(t)\lvert x_{j}(t-\tau)-x_{i}(t-\tau)\rvert+ \underset{j:j\neq i}{\sum}a_{ij}(t)\lvert x_{i}(t-\tau)-x_{i}(t-\sigma)\rvert}\\
		\vspace{0.3cm}\displaystyle{\hspace{1.2cm}= \underset{j:j\neq i}{\sum}a_{ij}(t)\lvert x_{j}(t-\tau)-x_{i}(t-\tau)\rvert+ \lvert x_{i}(t-\tau)-x_{i}(t-\sigma)\rvert\underset{j:j\neq i}{\sum}a_{ij}(t)}\\
		\displaystyle{\hspace{1.2cm}
		\leq \underset{j:j\neq i}{\sum}a_{ij}\lvert x_{j}(t-\tau)-x_{i}(t-\tau)\rvert+ \lvert x_{i}(t-\tau)-x_{i}(t-\sigma)\rvert,}
	\end{array}$$
	where we used \eqref{bound:a} in the last inequality.
        By definition \eqref{def:dx} of $d_x=d_x(t)$ we readily have
                 $$\lvert x_j(t-\tau)-x_i(t-\tau)\rvert
                 \leq d_x(t-\tau).$$
     Therefore, using \eqref{bound:a} again,
	\begin{equation}\label{firstestimate}
		\lvert\dot{x}_i(t)\rvert
		\leq d_x(t-\tau)+\lvert x_{i}(t-\tau)-x_{i}(t-\sigma)\rvert.
	\end{equation}
	Since $t-\sigma \geq t-\tau \geq 0$, we estimate
	\[
	   \lvert x_{i}(t-\tau)-x_{i}(t-\sigma)\rvert &\leq& \int_{t-\tau}^{t-\sigma}\lvert \dot{x}_i(s)\rvert \,\d s  \\
               &\leq& \int_{t-\tau}^{t-\sigma}\max_{j\in[N]}\lvert \dot{x}_j(s)\rvert \,\d s.
          \]
	Plugging into \eqref{firstestimate}, we obtain
	\(  \label{est:dot_x_i}
	     \lvert\dot{x}_i(t)\rvert\leq d_x(t-\tau)+\int_{t-\tau}^{t-\sigma}\max_{j\in[N]}\lvert \dot{x}_j(s)\rvert \,\d s,
	\)
	and taking the maximum over $i\in[N]$, we finally conclude \eqref{estonderiv}.
\end{proof}

\begin{lem} \label{lem:aux2}
	For almost all $t>2\tau$ we have
	\begin{equation}\label{estwithextraterm}
		\begin{array}{l}
			\vspace{0.3cm}\displaystyle{\frac{\d}{\d t}d_x(t)\leq 2\int_{t-\tau}^{t}d_x(s-\tau) \,\d s+2\int_{t-\sigma}^{t}d_x(s-\tau) \,\d s}\\
			 \displaystyle{\hspace{1.3cm} + 2\int_{t-\tau}^{t}\int_{s-\tau}^{s}\max_{l\in[N]}\lvert \dot{x}_l(r)\rvert \,\d r \d s
			   + 2\int_{t-\sigma}^{t}\int_{s-\tau}^{s}\max_{l\in[N]}\lvert \dot{x}_l(r)\rvert \,\d r \d s - 
			N \underline{a}(t) d_x(t),}
		\end{array}
	\end{equation}
	where \begin{equation}\label{underlinea}
		\underline{a}(t):=\min_{i,j\in[N]} a_{ij}(t),
	\end{equation}
	and $a_{ij} = a_{ij}(t)$ given by \eqref{commrates}.
\end{lem}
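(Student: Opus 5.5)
The plan is to compute the derivative of the diameter at a point of differentiability, split the right-hand side of \eqref{hkp} into an undelayed consensus part and a delay remainder, and bound the remainder with Lemma~\ref{lem:aux1}. Since the $x_i$ are $C^1$ for $t>0$, $d_x$ is locally Lipschitz and hence differentiable for almost all $t$. Fix such a $t>2\tau$ with $d_x(t)>0$ and choose indices $i,j$ with $d_x(t)=\lvert x_i(t)-x_j(t)\rvert$. By the usual argument for the maximum of finitely many functions, $\frac{\d}{\d t}d_x(t)\le \langle e,\dot x_i(t)-\dot x_j(t)\rangle$ with $e:=(x_i(t)-x_j(t))/d_x(t)$. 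If $d_x(t)=0$ we instead use $\frac{\d}{\d t}d_x(t)\le \lvert\dot x_i(t)-\dot x_j(t)\rvert$ for a maximizing pair (or note that $d_x$ attains a minimum there, so its derivative vanishes). The right-hand side of \eqref{estwithextraterm} is then nonnegative, so this case is trivial.

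\textbf{Splitting.} Write $x_k(t-\tau)=x_k(t)-\int_{t-\tau}^t\dot x_k$ and $x_i(t-\sigma)=x_i(t)-\int_{t-\sigma}^t\dot x_i$. This gives
\[
\dot x_i(t)=\sum_{k\ne i}a_{ik}(t)\bigl(x_k(t)-x_i(t)\bigr)+R_i(t).
\]
By \eqref{bound:a}, $\sum_k a_{ik}\le1$, so
\[
\lvert R_i(t)\rvert\le \int_{t-\tau}^t\max_l\lvert\dot x_l\rvert\,\d s+\int_{t-\sigma}^t\max_l\lvert\dot x_l\rvert\,\d s ,
\]
and the same bound holds for $R_j$.

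\textbf{Consensus part.} By maximality of the pair, $\langle x_k(t)-x_i(t),e\rangle\le0$ and $\langle x_k(t)-x_j(t),e\rangle\ge0$ for all $k$. Hence each coefficient $a_{ik}$, $a_{jk}$ can be replaced by $\underline a(t)$ while preserving the inequality. Since the terms $k=i$ and $k=j$ vanish, we may sum over all $k\in[N]$:
\[
\Bigl\langle e,\sum_k a_{ik}(x_k-x_i)-\sum_k a_{jk}(x_k-x_j)\Bigr\rangle\le -\underline a(t)\sum_{k\in[N]}\langle x_i-x_j,e\rangle=-N\underline a(t)d_x(t).
\]

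\textbf{Remainder.} It remains to bound the integrals of $\max_l\lvert\dot x_l(s)\rvert$. For $s\ge t-\tau>\tau$, Lemma~\ref{lem:aux1} applies and gives
\[
\max_l\lvert\dot x_l(s)\rvert\le d_x(s-\tau)+\int_{s-\tau}^{s-\sigma}\max_l\lvert\dot x_l(r)\rvert\,\d r\le d_x(s-\tau)+\int_{s-\tau}^{s}\max_l\lvert\dot x_l(r)\rvert\,\d r ,
\]
where we enlarge the domain of the inner integral. This is exactly where $t>2\tau$ is needed. Inserting this into the bounds for $\lvert R_i\rvert$ and $\lvert R_j\rvert$, which contribute identical estimates and hence the factor $2$, yields the four integral terms of \eqref{estwithextraterm}.

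The main obstacle I expect is the sign bookkeeping in the consensus part, namely justifying the replacement of the $a_{ik}$ by $\underline a$ and the passage to the full sum over $[N]$. The treatment of the derivative of $d_x$ at points where the maximizing pair changes is standard.
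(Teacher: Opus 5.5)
Your proof is correct and follows the paper's argument. It uses the same splitting of $x_k(t-\tau)-x_i(t-\sigma)$ into the undelayed difference plus the increments $x_k(t-\tau)-x_k(t)$ and $x_i(t)-x_i(t-\sigma)$, the same sign argument that lets you replace $a_{ik}$ by $\underline a(t)$ to obtain $-N\underline a(t)d_x(t)$, and the same application of Lemma~\ref{lem:aux1} with the inner integral enlarged to $[s-\tau,s]$. The only difference is technical: you differentiate $d_x$ pointwise at its a.e.\ differentiability points by comparing with $\lvert x_i-x_j\rvert$ for a maximizing pair, handling $d_x(t)=0$ separately, whereas the paper works with $\frac12\frac{\d}{\d t}d_x^2$ on intervals where the maximizing pair is fixed. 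Your pointwise version is actually the more robust way to deal with switching of the maximizing pair.
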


\begin{proof}
	Due to the continuity of the trajectories there exists an at most countable system of open, mutually disjoint intervals $\{I_n\}_{n\in \mathbb{N}}$ such that
	$$\bigcup_{n\in \mathbb{N}}\overline{I_n}= [2\tau, \infty)$$
	and, for all $n\in\mathbb{N}$, there exist indexes $i(n)$ and $k(n)$ for which 
	\begin{equation}\label{dt}
		d_x(t)=\lvert x_{i(n)}(t)-x_{k(n)}(t)\rvert,
	\end{equation}
	for all $t\in I_n$.
        Let us fix some $n\in \mathbb{N}$ and let $i:=i(n)$, $k:=k(n)$ be the indexes that satisfy condition \eqref{dt}. For $t\in I_n$ we have
	$$\begin{array}{l}
		\vspace{0.3cm}\displaystyle{\frac{1}{2}\frac{\d}{\d t}d_x(t)^2=\frac{1}{2}\frac{\d}{\d t}\lvert x_i(t)-x_k(t)\rvert^2=\langle x_i(t)-x_k(t),\dot{x}_i(t)-\dot{x}_k(t)\rangle}\\
		\vspace{0.3cm}\displaystyle{\hspace{0.3cm}= \left\langle
		     x_i(t)-x_k(t),\underset{j:j\neq i}{\sum}a_{ij}(t)(x_{j}(t-\tau)-x_{i}(t-\sigma))-\underset{j:j\neq k}{\sum}a_{kj}(t)(x_{j}(t-\tau)-x_{k}(t-\sigma)) \right\rangle}\\
		\vspace{0.3cm}\displaystyle{\hspace{0.3cm}=\underset{j:j\neq i}{\sum}a_{ij}(t)\langle x_i(t)-x_k(t),x_{j}(t-\tau)-x_{i}(t-\sigma)\rangle-\underset{j:j\neq k}{\sum}a_{kj}(t)\langle x_i(t)-x_k(t),x_{j}(t-\tau)-x_{k}(t-\sigma)\rangle}\\
		\displaystyle{\hspace{0.3cm}=: (\RI) + (\RII).}
	\end{array}$$
	Let us first estimate the term $(\RI)$. We write
	$$\begin{array}{l}
		\vspace{0.3cm}\displaystyle{(\RI)=\underset{j:j\neq i}{\sum}a_{ij}(t)\langle x_i(t)-x_k(t),x_{j}(t-\tau)-x_{j}(t)\rangle+\underset{j:j\neq i}{\sum}a_{ij}(t)\langle x_i(t)-x_k(t),x_{j}(t)-x_{i}(t)\rangle}\\
		\displaystyle{\hspace{1cm}+\underset{j:j\neq i}{\sum}a_{ij}(t)\langle x_i(t)-x_k(t),x_{i}(t)-x_{i}(t-\sigma)\rangle =: (\RI_1) + (\RI_2) + (\RI_3).}
	\end{array}$$ 
	The first term is estimated with the Cauchy-Schwarz inequality,
	$$(\RI_1) \leq \underset{j:j\neq i}{\sum}a_{ij}(t)\lvert x_i(t)-x_k(t)\rvert\lvert x_{j}(t-\tau)-x_{j}(t)\rvert=d_x(t)\underset{j:j\neq i}{\sum}a_{ij}(t)\lvert x_{j}(t-\tau)-x_{j}(t)\rvert.$$
	Now, since $t-\tau\geq 0$, we write
	$$\begin{array}{l}
		\vspace{0.3cm}\displaystyle{\lvert x_j(t-\tau)-x_j(t)\rvert
		   \leq \int_{t-\tau}^{t}\lvert\dot{x}_j(s) \rvert \,\d s\leq \int_{t-\tau}^{t}\max_{l\in[N]}\lvert \dot{x}_l(s)\rvert \,\d s.}
	\end{array}$$
	Since $s\geq t-\tau\geq \tau$, we can apply estimate \eqref{estonderiv} to get
	$$\lvert x_j(t-\tau)-x_j(t)\rvert\leq \int_{t-\tau}^{t}\left(d_x(s-\tau)+\int_{s-\tau}^{s}\max_{l\in[N]}\lvert \dot{x}_l(r)\rvert \,\d r\right) \d s.$$
	Hence, using \eqref{bound:a},
	$$\begin{array}{l}
		\vspace{0.3cm}\displaystyle{(\RI_1)\leq \left(d_x(t)\int_{t-\tau}^{t}d_x(s-\tau) \,\d s
		   + d_x(t)\int_{t-\tau}^{t}\int_{s-\tau}^{s}\max_{l\in[N]}\lvert \dot{x}_l(r)\rvert \,\d r\d s\right)\underset{j:j\neq i}{\sum}a_{ij}(t)}\\
		\displaystyle{\hspace{2cm}\leq d_x(t)\int_{t-\tau}^{t}d_x(s-\tau) \,\d s + d_x(t)\int_{t-\tau}^{t}\int_{s-\tau}^{s}\max_{l\in[N]}\lvert \dot{x}_l(r)\rvert \,\d r\d s.}
	\end{array}$$
	Similarly, we have
	$$(\RI_3)\leq d_x(t)\lvert x_i(t)-x_i(t-\sigma)\rvert\leq d_x(t)\int_{t-\sigma}^{t}\lvert \dot{x}_i(s)\rvert \,\d s.$$
	Then, since $s\geq t-\sigma\geq 2\tau-\sigma\geq \tau$, \eqref{estonderiv} yields
	$$(\RI_3) \leq d_x(t)\int_{t-\sigma}^{t}d_x(s-\tau) \,\d s+d_x(t)\int_{t-\sigma}^{t}\int_{s-\tau}^{s}\max_{l\in[N]}\lvert \dot{x}_l(r)\rvert \,\d r \d s.$$
	To estimate the term $(\RI_2)$, we use the Cauchy-Schwarz inequality and the identity $d_x(t)=\lvert x_i(t)-x_k(t)\rvert$, which gives
	$$\begin{array}{l}
		\vspace{0.3cm}\displaystyle{\langle x_i(t)-x_k(t),x_{j}(t)-x_{i}(t)\rangle=-\langle x_i(t)-x_k(t),x_{i}(t)-x_{k}(t)\rangle+\langle x_i(t)-x_k(t),x_{j}(t)-x_{k}(t)\rangle}\\
		\displaystyle{\hspace{0.3cm}\leq -d_x(t)^2+\lvert x_i(t)-x_k(t)\rvert\lvert x_j(t)-x_k(t)\rvert = d_x(t) (-d_x(t)+\lvert x_j(t)-x_k(t)\rvert)\leq 0,}
	\end{array}$$
	where we used $\lvert x_j(t)-x_k(t)\rvert\leq d_x(t)$ in the last inequality. Therefore,
	$$(\RI_2)\leq 
	   \underline{a}(t) \underset{j:j\neq i}{\sum} \langle x_i(t)-x_k(t),x_{j}(t)-x_{i}(t)\rangle.$$
	Combining the above estimates, we arrive at
	\begin{equation}\label{estI}
		\begin{array}{l}
			\vspace{0.3cm}\displaystyle{(\RI) \leq d_x(t)\left(\int_{t-\tau}^{t}d_x(s-\tau) \,\d s
			    + \int_{t-\sigma}^{t}d_x(s-\tau) \,\d s+\int_{t-\tau}^{t}\int_{s-\tau}^{s}\max_{l\in[N]}\lvert \dot{x}_l(r)\rvert \,\d r\d s\right.}\\
			\displaystyle{\hspace{2cm}\left.+\int_{t-\sigma}^{t}\int_{s-\sigma}^{s}\max_{l\in[N]}\lvert \dot{x}_l(r)\rvert \,d r\d s\right)
			  + \underline{a}(t) \underset{j:j\neq i}{\sum} \langle x_i(t)-x_k(t),x_{j}(t)-x_{i}(t)\rangle.}
		\end{array}
	\end{equation}
	For the term $(\mbox{\Romannum{2}})$ we write
	$$\begin{array}{l}
		\vspace{0.3cm}\displaystyle{(\RII) =-\underset{j:j\neq k}{\sum}a_{kj}(t)\langle x_i(t)-x_k(t),x_{j}(t-\tau)-x_{j}(t)\rangle-\underset{j:j\neq k}{\sum}a_{kj}(t)\langle x_i(t)-x_k(t),x_{j}(t)-x_{k}(t)\rangle}\\
		\displaystyle{\hspace{1cm}-\underset{j:j\neq k}{\sum}a_{kj}(t)\langle x_i(t)-x_k(t),x_{k}(t)-x_{k}(t-\sigma)\rangle =: (\RII_1) + (\RII_2) + (\RII_3).}
	\end{array}$$
    Applying similar steps as before, we obtain 
$$(\RII_1) \leq d_x(t)\int_{t-\tau}^{t}d_x(s-\tau) \,\d s + d_x(t)\int_{t-\tau}^{t}\int_{s-\tau}^{s}\max_{l\in[N]}\lvert \dot{x}_l(r)\rvert \,\d r\d s,$$
$$(\RII_3) \leq d_x(t)\int_{t-\sigma}^{t}d_x(s-\tau) \,\d s + d_x(t)\int_{t-\tau}^{t}\int_{s-\tau}^{s}\max_{l\in[N]}\lvert \dot{x}_l(r)\rvert \,\d r\d s,$$
and
	$$(\RII_2) \leq -\underline{a}(t) \underset{j:j\neq k}{\sum} \langle x_i(t)-x_k(t),x_{j}(t)-x_{k}(t)\rangle,$$
where in the last estimate above we used that $\langle x_i(t)-x_k(t),x_{j}(t)-x_{k}(t)\rangle\geq 0$.
	Consequently,
	\begin{equation}\label{estII}
		\begin{array}{l}
		\vspace{0.3cm}\displaystyle{(\RII) \leq d_x(t)\left(\int_{t-\tau}^{t}d_x(s-\tau) \,\d s
		   +\int_{t-\sigma}^{t}d_x(s-\tau) \,\d s + \int_{t-\tau}^{t}\int_{s-\tau}^{s}\max_{l\in[N]}\lvert \dot{x}_l(r)\rvert \,\d r\d s\right.}\\
		\displaystyle{\hspace{2cm}\left.+\int_{t-\sigma}^{t}\int_{s-\tau}^{s}\max_{l\in[N]}\lvert \dot{x}_l(r)\rvert drds\right)
		   - \underline{a}(t) \underset{j:j\neq k}{\sum} \langle x_i(t)-x_k(t),x_{j}(t)-x_{k}(t)\rangle.}
	\end{array}
	\end{equation}
	Combining estimates \eqref{estI} and \eqref{estII}  gives
	$$\begin{array}{l}
		\vspace{0.3cm}\displaystyle{\frac{1}{2}\frac{\d}{\d t}d_x(t)^2\leq 
		   2 d_x(t)\left(\int_{t-\tau}^{t}d_x(s-\tau) \,\d s + \int_{t-\sigma}^{t}d_x(s-\tau) \,\d s \right.} \\
		  \vspace{0.3cm}\displaystyle{\hspace{3cm}\left.  + \int_{t-\tau}^{t}\int_{s-\tau}^{s}\max_{l\in[N]}\lvert \dot{x}_l(r)\rvert \,\d r\d s
		       + \int_{t-\sigma}^{t}\int_{s-\tau}^{s}\max_{l\in[N]}\lvert \dot{x}_l(r)\rvert \,\d r\d s\right)  }\\
		  \vspace{0.3cm}\displaystyle{\hspace{2cm}
		       + \,\underline{a}(t) \left( \underset{j:j\neq i}{\sum} \langle x_i(t)-x_k(t),x_{j}(t)-x_{i}(t)\rangle
		          - \underset{j:j\neq k}{\sum} \langle x_i(t)-x_k(t),x_{j}(t)-x_{k}(t)\rangle \right).}
	\end{array}$$
	Now,
	$$\begin{array}{l}
		\vspace{0.3cm}\displaystyle{\underset{j:j\neq i}{\sum} \langle x_i(t)-x_k(t),x_{j}(t)-x_{i}(t)\rangle - \underset{j:j\neq k}{\sum} \langle x_i(t)-x_k(t),x_{j}(t)-x_{k}(t)\rangle}\\
		\vspace{0.3cm}\displaystyle{\hspace{1cm} = \underset{j:j\neq i,k}{\sum} (\langle x_i(t)-x_k(t),x_{j}(t)-x_{i}(t)\rangle-\langle x_i(t)-x_k(t),x_{j}(t)-x_{k}(t)\rangle)}\\
		\vspace{0.3cm}\displaystyle{\hspace{2cm}-\lvert x_i(t)-x_k(t)\rvert^2-\lvert x_i(t)-x_k(t)\rvert^2}\\
		\vspace{0.3cm}\displaystyle{\hspace{1cm}=-(N-2)\lvert x_i(t)-x_k(t)\rvert^2-2\lvert x_i(t)-x_k(t)\rvert^2}\\
		\displaystyle{\hspace{1cm}=-N\lvert x_i(t)-x_k(t)\rvert^2 = - N d_x(t)^2.}
	\end{array}$$
	Consequently,
	$$\begin{array}{l}
		\vspace{0.3cm}\displaystyle{\frac{1}{2}\frac{\d}{\d t}d_x(t)^2\leq d_x(t) \left(2\int_{t-\tau}^{t}d_x(s-\tau) \,\d s+ 2 \int_{t-\sigma}^{t}d_x(s-\tau) \,\d s
		   +2\int_{t-\tau}^{t}\int_{s-\tau}^{s}\max_{l\in[N]}\lvert \dot{x}_l(r)\rvert \,\d r\d s\right.}\\
		\displaystyle{\hspace{3cm}\left. + 2\int_{t-\sigma}^{t}\int_{s-\tau}^{s}\max_{l\in[N]}\lvert \dot{x}_l(r)\rvert \,\d r\d s- N \underline{a}(t)d_x(t)\right),}
	\end{array}$$
	which directly implies that \eqref{estwithextraterm} 
	holds for almost all $t\in I_n$. Repeating the above argument for every interval $I_n$, $n\in\N$,
	we deduce that \eqref{estwithextraterm} holds for almost all $t\in (2\tau,\infty)$.
\end{proof}

\begin{cor}\label{cor:aux}
	For almost all $t> 2\tau$ it holds
	\begin{equation}\label{estextraterm2}
		\begin{array}{l}
		\vspace{0.3cm}\displaystyle{\frac{\d}{\d t}d_x(t)\leq 4\int_{t-\tau}^{t}d_x(s-\tau) \,\d s + 4\tau\int_{t-2\tau}^{t}\max_{l\in[N]}\lvert \dot{x}_l(r)\rvert \,\d r- N \underline{a}(t)d_x(t),}
		\end{array}
	\end{equation}
	where $\underline{a}=\underline{a}(t)$ is defined in \eqref{underlinea}.
\end{cor}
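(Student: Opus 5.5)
The plan is to start from the estimate \eqref{estwithextraterm} of Lemma~\ref{lem:aux2}, valid for almost all $t>2\tau$, and simplify its four integral terms using only $0\le\sigma\le\tau$ and the nonnegativity of the integrands. The term $-N\underline{a}(t)d_x(t)$ is carried over unchanged.

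For the single integrals I use that $d_x(s-\tau)\ge 0$ and $[t-\sigma,t]\subseteq[t-\tau,t]$. This gives
\[
2\int_{t-\sigma}^{t}d_x(s-\tau)\,\d s\le 2\int_{t-\tau}^{t}d_x(s-\tau)\,\d s,
\]
so the first two terms together are bounded by $4\int_{t-\tau}^{t}d_x(s-\tau)\,\d s$.

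For the double integrals I write $M(r):=\max_{l\in[N]}\lvert\dot x_l(r)\rvert\ge 0$. By the same inclusion, the $\sigma$-term is bounded by the $\tau$-term, so the two double integrals together are at most
\[
4\int_{t-\tau}^{t}\int_{s-\tau}^{s}M(r)\,\d r\,\d s .
\]
For every $s\in[t-\tau,t]$ the inner interval satisfies $[s-\tau,s]\subseteq[t-2\tau,t]$. Hence the inner integral is at most $\int_{t-2\tau}^{t}M(r)\,\d r$, which no longer depends on $s$. Integrating over $s$ on an interval of length $\tau$ produces the factor $\tau$, and the double-integral contribution is bounded by $4\tau\int_{t-2\tau}^{t}M(r)\,\d r$. (Fubini would give a sharper weighted bound, but the crude one is exactly what is stated.)

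Adding the pieces gives \eqref{estextraterm2} for almost all $t>2\tau$. The integrals are well defined there because $t-2\tau>0$, so $M$ is only evaluated where the solution is $C^1$.

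There is no real obstacle. The only step needing care is the double-integral domain inclusion. I note one point in the proof of Lemma~\ref{lem:aux2}: in \eqref{estI} the inner integral of one term is written over $[s-\sigma,s]$ rather than $[s-\tau,s]$. This is harmless, since $[s-\sigma,s]\subseteq[s-\tau,s]$ and $M\ge0$, so the stated form of \eqref{estwithextraterm} remains valid.
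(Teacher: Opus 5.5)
Your proof is correct and is essentially the paper's argument: you merge the single integrals via $[t-\sigma,t]\subseteq[t-\tau,t]$ and enlarge the inner integrals using $[s-\tau,s]\subseteq[t-2\tau,t]$. The only cosmetic difference is that you first absorb the $\sigma$-double integral into the $\tau$-one, whereas the paper bounds it directly by $2\sigma\int_{t-2\tau}^{t}\max_{l\in[N]}\lvert\dot x_l(r)\rvert\,\d r$ via $[s-\tau,s]\subseteq[t-\tau-\sigma,t]$ and then uses $\sigma\le\tau$. On your aside about \eqref{estI}, the conclusion is right but the reason is different: \eqref{estonderiv} gives an inner integral over $[s-\tau,s-\sigma]$, which lies in $[s-\tau,s]$ but not in general in $[s-\sigma,s]$, so the $[s-\sigma,s]$ in \eqref{estI} is simply a typo (the preceding bound for $(\RI_3)$ correctly has $[s-\tau,s]$) rather than something rescued by $[s-\sigma,s]\subseteq[s-\tau,s]$.
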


\begin{proof}
	Due to the nonnegativity of the diameter $d_x=d_x(t)$ and the assumption $\tau\geq\sigma$, \eqref{estwithextraterm} readily gives
	$$\begin{array}{l}
		\vspace{0.3cm}\displaystyle{\frac{\d}{\d t}d_x(t)\leq 4\int_{t-\tau}^{t}d_x(s-\tau) \,\d s
		     + 2\int_{t-\tau}^{t}\int_{s-\tau}^{s}\max_{l\in[N]}\lvert \dot{x}_l(r)\rvert \,\d r\d s}\\
		\displaystyle{\hspace{3cm} +2\int_{t-\sigma}^{t}\int_{s-\tau}^{s}\max_{l\in[N]}\lvert \dot{x}_l(r)\rvert \,\d r\d s - N \underline{a}(t)d_x(t),}
	\end{array}$$
	for almost all $t> 2\tau$.
	Moreover, for all $s\in [t-\tau,t]$ we have $[s-\tau,s]\subset [t-2\tau,t]$, so that
	$$\begin{array}{l}
		\vspace{0.3cm}\displaystyle{2\int_{t-\tau}^{t}\int_{s-\tau}^{s}\max_{l\in[N]}\lvert \dot{x}_l(r)\rvert \,\d r\d s
		   \leq 2\int_{t-\tau}^{t}\int_{t-2\tau}^{t}\max_{l\in[N]}\lvert \dot{x}_l(r)\rvert \,\d r\d s}\\
		\displaystyle{\hspace{5cm}=2\tau\int_{t-2\tau}^{t}\max_{l\in[N]}\lvert \dot{x}_l(r)\rvert \,\d r.}
	\end{array}$$
	Similarly, since for all $s\in [t-\sigma,t]$ we have $[s-\tau,s]\subset[t-\tau-\sigma,t]\subset[t-2\tau,t]$, we find
	$$2\int_{t-\sigma}^{t}\int_{s-\tau}^{s}\max_{l\in[N]}\lvert \dot{x}_l(r)\rvert \,\d r\d s\leq
	   2\sigma\int_{t-2\tau}^{t}\max_{l\in[N]}\lvert \dot{x}_l(r)\rvert \,\d r\leq 2\tau\int_{t-2\tau}^{t}\max_{l\in[N]}\lvert \dot{x}_l(r)\rvert \,\d r,$$
	   and \eqref{estextraterm2} follows.
\end{proof}

\subsection{The Lyapunov-Krasovskii functional}\label{subsec:Lyap}
For $\{x_i\}_{i\in[N]}$ a global solution to \eqref{hkp}--\eqref{bound:a}
and a constant $\beta>0$ we introduce the functional $F: \RR^+\rightarrow \RR^+$,
\(   \label{def:F}
	F(t) :=
		d_x(t)+\beta\int_{[t-2\tau]^+}^{t}e^{-(t-s)}\int_{s}^{t}\underset{i\in[N]}{\max}\lvert \dot{x}_i(r)\rvert \,\d r\d s \qquad\mbox{for } t\geq 0,
\)
where we use the notation $[t-2\tau]^+ := \max\{0, t-2\tau\}$.
By definition, $F$ is continuous and almost everywhere differentiable on $\RR^+$.

\begin{lem}\label{Fder}
	Assume that
	\(  \label{ass:beta}
	2\beta e^{-2\tau}-4\tau-\beta\geq 0.
	\)
	Then, for almost all $t> 2\tau$,
	\begin{equation}\label{noextraterm}
		\frac{\d}{\d t}F(t) \leq 4\int_{t-\tau}^{t}F(s-\tau)\d s- (N-1) \underline{a}(t)F(t)+\beta(1-e^{-2\tau})F(t-\tau),
	\end{equation}
	where $\underline{a}(\cdot)$ is defined in \eqref{underlinea}.
\end{lem}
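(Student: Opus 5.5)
The plan is to differentiate the two parts of $F$ separately for $t>2\tau$, where $[t-2\tau]^+=t-2\tau$. For the $d_x$ part I will use Corollary~\ref{cor:aux}. For the integral part I will use the Leibniz rule. The one leftover positive term will be controlled by Lemma~\ref{lem:aux1}, and condition \eqref{ass:beta} will absorb all remaining integrals of $\max_i|\dot x_i|$.

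Throughout, abbreviate $M(r):=\max_{i\in[N]}\lvert\dot x_i(r)\rvert$, which is continuous for $r>0$ by \eqref{hkp}. Also abbreviate
\[
G(t):=\int_{t-2\tau}^{t}e^{-(t-s)}\int_s^t M(r)\,\d r\,\d s\geq 0,
\]
so that $F=d_x+\beta G$.

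\textbf{Step 1: derivative of $G$.} Differentiating in $t$ gives three contributions.
\begin{itemize}
\item The factor $e^{-(t-s)}$ gives $-G(t)$.
\item The upper limit $s=t$ contributes nothing, since the inner integral vanishes there.
\item The lower limit $s=t-2\tau$ gives $-e^{-2\tau}\int_{t-2\tau}^t M(r)\,\d r$.
\item The inner upper limit gives $M(t)\int_{t-2\tau}^t e^{-(t-s)}\,\d s=(1-e^{-2\tau})M(t)$.
\end{itemize}
Hence
\[
G'(t)=-G(t)+(1-e^{-2\tau})M(t)-e^{-2\tau}\int_{t-2\tau}^t M(r)\,\d r .
\]
I expect this bookkeeping to be the only delicate point; it is routine since $M$ is continuous.

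\textbf{Step 2: remove $M(t)$.} By Lemma~\ref{lem:aux1}, valid since $t>2\tau\ge\tau$,
\[
M(t)\le d_x(t-\tau)+\int_{t-2\tau}^t M(r)\,\d r ,
\]
where I enlarged $[t-\tau,t-\sigma]$ to $[t-2\tau,t]$. Since $G\ge0$, we also have $d_x(t-\tau)\le F(t-\tau)$.

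\textbf{Step 3: combine with Corollary~\ref{cor:aux}.} Adding $\beta G'$ to \eqref{estextraterm2} gives, for almost all $t>2\tau$,
\[
F'(t)\le 4\int_{t-\tau}^t d_x(s-\tau)\,\d s+\beta(1-e^{-2\tau})F(t-\tau)+\bigl(4\tau+\beta-2\beta e^{-2\tau}\bigr)\int_{t-2\tau}^t M(r)\,\d r-N\underline a(t)d_x(t)-\beta G(t).
\]
The middle coefficient is $\le 0$ by \eqref{ass:beta}, so that term can be dropped. Next, $d_x(s-\tau)\le F(s-\tau)$ handles the first integral.

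\textbf{Step 4: the damping term.} By \eqref{bound:a} we have $(N-1)\underline a(t)\le 1$. This gives $-\beta G\le -(N-1)\underline a\,\beta G$, and trivially $-N\underline a\, d_x\le -(N-1)\underline a\, d_x$. Together these yield $-N\underline a\, d_x-\beta G\le -(N-1)\underline a(t)F(t)$, which completes \eqref{noextraterm}.
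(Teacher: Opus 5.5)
Your proposal is correct and follows the paper's proof essentially step by step. You differentiate the integral part of $F$ by the Leibniz rule, bound the derivative of $d_x$ via Corollary~\ref{cor:aux}, remove $\max_{i\in[N]}\lvert\dot x_i(t)\rvert$ with Lemma~\ref{lem:aux1} after enlarging $[t-\tau,t-\sigma]$ to $[t-2\tau,t]$, discard the integral whose coefficient $4\tau+\beta-2\beta e^{-2\tau}$ is nonpositive by \eqref{ass:beta}, and absorb the damping using $(N-1)\underline{a}(t)\leq 1$ and $d_x\leq F$.
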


\begin{proof}
	For almost all $t>2\tau$ we have 
	$$\begin{array}{l}
		\vspace{0.3cm}\displaystyle{\frac{\d}{\d t} F(t) = \frac{\d}{\d t} d_x(t) - \beta e^{-2\tau}\int_{t-2\tau}^{t}\max_{i\in[N]}\lvert \dot{x}_i(r)\rvert \,\d r-\beta\int_{t-2\tau}^{t}e^{-(t-s)}\int_{s}^{t}\max_{i\in[N]}\lvert \dot{x}_i(r)\rvert \,\d r\d s}\\
		\vspace{0.3cm}\displaystyle{\hspace{2cm}+\beta\max_{i\in[N]}\lvert \dot{x}_i(t)\rvert \int_{t-2\tau}^{t}e^{-(t-s)} \,\d s}\\
		\vspace{0.3cm}\displaystyle{\hspace{1.4cm}=\frac{\d}{\d t}d_x(t)-\beta e^{-2\tau}\int_{t-2\tau}^{t}\max_{i\in[N]}\lvert \dot{x}_i(r)\rvert \,\d r-\beta\int_{t-2\tau}^{t}e^{-(t-s)}\int_{s}^{t}\max_{i\in[N]}\lvert \dot{x}_i(r)\rvert \,\d r\d s}\\
		\displaystyle{\hspace{2cm}+\beta(1-e^{-2\tau}) \max_{i\in[N]}\lvert \dot{x}_i(t)\rvert.}
	\end{array} $$
	Using \eqref{estextraterm2} to estimate the term $ \frac{\d}{\d t} d_x(t)$ gives
	$$\begin{array}{l}
		\vspace{0.3cm}\displaystyle{\frac{\d}{\d t}F(t)\leq 4\int_{t-\tau}^{t}d_x(s-\tau)\,\d s + \left( 4\tau-\beta e^{-2\tau}\right) \int_{t-2\tau}^{t}\max_{l\in[N]}\lvert \dot{x}_l(r)\rvert \,\d r
		   - N\underline{a}(t)d_x(t) }\\
		\vspace{0.3cm}\displaystyle{\hspace{3cm}-\beta\int_{t-2\tau}^{t}e^{-(t-s)}\int_{s}^{t}\max_{i\in[N]}\lvert \dot{x}_i(r)\rvert \,\d r\d s
		   +\beta \left(1-e^{-2\tau} \right) \max_{i\in[N]}\lvert \dot{x}_i(t)\rvert.}
	\end{array}$$
	Applying \eqref{estonderiv} to the last term of the right-hand side, we obtain 
	$$\begin{array}{l}
		\vspace{0.3cm}\displaystyle{\frac{\d}{\d t} F(t)\leq 4\int_{t-\tau}^{t}d_x(s-\tau)\,\d s
		   + \left(4\tau - 2\beta e^{-2\tau} + \beta\right) \int_{t-2\tau}^{t}\max_{l\in[N]}\lvert \dot{x}_l(r)\rvert \,\d r - N \underline{a}(t)d_x(t)}\\
		\displaystyle{\hspace{1.5cm}-\beta\int_{t-2\tau}^{t}e^{-(t-s)}\int_{s}^{t}\max_{i\in[N]}\lvert \dot{x}_i(r)\rvert \,\d r\d s
		  +\beta \left(1-e^{-2\tau} \right) d_x(t-\tau),}
	\end{array}$$
	where we used the estimate $\int_{t-\tau}^{t-\sigma}\max_{i\in[N]} \lvert \dot{x}_i(r)\rvert \,\d r\leq \int_{t-2\tau}^{t}\max_{i\in[N]} \lvert \dot{x}_i(r)\rvert \,\d r$.
	Now, by definition \eqref{underlinea} and assumption \eqref{bound:a}, we have $(N-1)\underline{a}(t) \leq 1$. Therefore,
	$$\begin{array}{l}
		\vspace{0.3cm}\displaystyle{-N\underline{a}(t)d_x(t)-\beta\int_{t-2\tau}^{t}e^{-(t-s)}\int_{s}^{t}\max_{i\in[N]}\lvert \dot{x}_i(r)\rvert \,\d r\d s}\\
		\displaystyle{\hspace{2cm}\leq - (N-1) \underline{a}(t)\left(d_x(t)+\beta\int_{t-2\tau}^{t}e^{-(t-s)}\int_{s}^{t}\max_{i\in[N]}\lvert \dot{x}_i(r)\rvert \,\d r\d s\right)}\\
		  \vspace{0.3cm}\displaystyle{\hspace{2cm} = - (N-1)\underline{a}(t)F(t).}
	\end{array}$$
	Consequently,
	\[
	   \frac{\d}{\d t}F(t) &\leq&  4\int_{t-\tau}^{t}d_x(s-\tau)\,\d s + \left( 4\tau - 2\beta e^{-2\tau}+ \beta\right)\int_{t-2\tau}^{t}\max_{l\in[N]}\lvert \dot{x}_l(r)\rvert \,\d r \\
	   && - (N-1) \underline{a}(t)F(t)+\beta(1-e^{-2\tau})d_x(t-\tau).
	\]
	Recalling the assumption $4\tau - 2\beta e^{-2\tau}+ \beta \leq 0$, we have
	\[
 	   \frac{\d}{\d t} F(t) \leq 4\int_{t-\tau}^{t}d_x(s-\tau)\,\d s - (N-1)\underline{a}(t)F(t) + \beta(1-e^{-2\tau})d_x(t-\tau).
	\]
	We conclude by observing that $d_x(t) \leq F(t)$ for all $t\geq 0$.
\end{proof}

\begin{lem} \label{lem:est_xi_xj}
For all $i,j\in[N]$ and $t\geq 2\tau$ we have  
\(   \label{est_xi_xj}
   \left| x_j(t-\tau) - x_i(t-\sigma) \right| \leq \dx_x(t-\tau) +  \int_{t-\tau}^{t-\sigma} \dx_x(s-\tau) \, \d s
      + \beta^{-1} e^{2\tau} F(t-\sigma).
\)
\end{lem}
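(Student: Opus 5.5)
The plan is to split the difference by the triangle inequality at the intermediate point $x_i(t-\tau)$, control the resulting self-increment of agent $i$ with Lemma~\ref{lem:aux1}, and then absorb the leftover double integral of $\max_l|\dot x_l|$ into the memory part of the functional $F$ from \eqref{def:F}.

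\emph{Triangle inequality and Lemma~\ref{lem:aux1}.} Writing $M(r):=\max_{l\in[N]}\lvert \dot x_l(r)\rvert$, the triangle inequality gives
\[
\lvert x_j(t-\tau)-x_i(t-\sigma)\rvert \leq \lvert x_j(t-\tau)-x_i(t-\tau)\rvert + \lvert x_i(t-\tau)-x_i(t-\sigma)\rvert \leq d_x(t-\tau) + \int_{t-\tau}^{t-\sigma} M(s)\,\d s .
\]
Since $t\geq 2\tau$, every $s\in[t-\tau,t-\sigma]$ satisfies $s\geq\tau$, so \eqref{estonderiv} applies and yields
\[
\int_{t-\tau}^{t-\sigma} M(s)\,\d s \leq \int_{t-\tau}^{t-\sigma} d_x(s-\tau)\,\d s + \int_{t-\tau}^{t-\sigma}\int_{s-\tau}^{s-\sigma} M(r)\,\d r\,\d s .
\]
The first term on the right is exactly the second term in \eqref{est_xi_xj}. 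It therefore remains to show
\[
J:=\int_{t-\tau}^{t-\sigma}\int_{s-\tau}^{s-\sigma} M(r)\,\d r\,\d s \leq \beta^{-1}e^{2\tau}F(t-\sigma).
\]

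\emph{Comparing $J$ with $F(t-\sigma)$ (the main step).} Exchange the order of integration in both $J$ and the integral term of $F(t-\sigma)$.

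For $J$, one gets $J=\int w(r)M(r)\,\d r$ over $r\in[t-2\tau,t-2\sigma]$. The weight is
\[
w(r)=\bigl\lvert [\max(t-\tau,r+\sigma),\,\min(t-\sigma,r+\tau)]\bigr\rvert ,
\]
and hence $0\leq w(r)\leq r+2\tau-t$.

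For $F(t-\sigma)$, set $a:=[t-\sigma-2\tau]^+$ and use $e^{-(t-\sigma-s)}\geq e^{-2\tau}$ on the integration range. This gives
\[
F(t-\sigma)\geq \beta e^{-2\tau}\int_a^{t-\sigma}\int_s^{t-\sigma}M(r)\,\d r\,\d s = \beta e^{-2\tau}\int_a^{t-\sigma}(r-a)M(r)\,\d r .
\]

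Two checks then finish the comparison. First, the domain of $J$ lies inside that of $F$: $t-2\tau\geq 0$ and $t-2\tau\geq t-\sigma-2\tau$, so $t-2\tau\geq a$, while $t-2\sigma\leq t-\sigma$. Second, the weights are ordered: $r+2\tau-t\leq r-a$ in both cases. If $a=t-\sigma-2\tau$, this reads $-t\leq -t+\sigma$. If $a=0$, it reads $2\tau\leq t$, which is exactly the hypothesis $t\geq 2\tau$.

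Since $M\geq 0$, this gives $J\leq \int_a^{t-\sigma}(r-a)M(r)\,\d r\leq \beta^{-1}e^{2\tau}F(t-\sigma)$. This step is where the hypothesis $t\geq 2\tau$ is genuinely needed: it handles the truncated case $[t-\sigma-2\tau]^+=0$ and also justifies applying Lemma~\ref{lem:aux1}. I expect the weight bookkeeping here to be the main obstacle.

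Combining the three displays yields \eqref{est_xi_xj}.
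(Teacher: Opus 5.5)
Your proof is correct and follows the paper's argument. You split at $x_i(t-\tau)$, apply Lemma~\ref{lem:aux1}, and then dominate the leftover double integral by the memory term of $F(t-\sigma)$, using $e^{-(t-\sigma-s)}\ge e^{-2\tau}$ and $[t-\sigma-2\tau]^+\le t-2\tau$ (the latter is where $t\ge 2\tau$ enters). The only difference is bookkeeping: the paper substitutes $\tilde s=s-\tau$ and enlarges the region to $\int_{t-2\tau}^{t-\sigma}\int_{s}^{t-\sigma}$, while you use Fubini and compare weights pointwise, and your bound $w(r)\le r+2\tau-t$ is exactly the weight of that intermediate integral.
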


\begin{proof}
For all $i,j\in[N]$, we estimate
\begin{equation}\label{x_i_x_j}
	\begin{array}{l}
		\vspace{0.3cm}\displaystyle{\left| x_j(t-\tau) - x_i(t-\sigma) \right|\leq \left| x_j(t-\tau) - x_i(t-\tau) \right| +  \left| x_i(t-\tau) - x_i(t-\sigma) \right|}\\
		\displaystyle{\hspace{2cm}\leq\dx_x(t-\tau) +  \int_{t-\tau}^{t-\sigma} \left| \dot x_i(s) \right| \d s.}
	\end{array}
\end{equation}
Estimate \eqref{estonderiv} gives
\[
    \left| \dot x_i(s) \right|  \leq \dx_x(s-\tau) + \int_{s-\tau}^{s-\sigma} \max_{k\in[N]} \left| \dot x_k(r) \right| \d r,
\]
so that
\[
   \int_{t-\tau}^{t-\sigma} \left| \dot x_i(s) \right| \d s \leq  \int_{t-\tau}^{t-\sigma} \dx_x(s-\tau) \, \d s
      + \int_{t-\tau}^{t-\sigma} \int_{s-\tau}^{s-\sigma} \max_{k\in[N]} \left| \dot x_k(r) \right| \d r\d s     
\]
Using the change of variable $\tilde s := s-\tau$ in the second integral, we obtain
$$
    \begin{array}{l}
    	\vspace{0.3cm}\displaystyle{\int_{t-\tau}^{t-\sigma} \int_{s-\tau}^{s-\sigma} \max_{k\in[N]} \left| \dot x_k(r) \right| \d r\d s =
    		\int_{t-2\tau}^{t-\tau-\sigma} \int_{\tilde s}^{\tilde s+\tau-\sigma} \max_{k\in[N]} \left| \dot x_k(r) \right| \d r\d \tilde s}\\
    		\displaystyle{\hspace{2cm}\leq \int_{t-2\tau}^{t-\tau-\sigma} \int_{\tilde s}^{t-2\sigma} \max_{k\in[N]} \left| \dot x_k(r) \right| \d r\d \tilde s\leq \int_{t-2\tau}^{t-\sigma} \int_{\tilde s}^{t-\sigma} \max_{k\in[N]} \left| \dot x_k(r) \right| \d r\d \tilde s.}
    \end{array}
$$
Therefore,
\[
   \int_{t-\tau}^{t-\sigma} \left| \dot x_i(s) \right| \d s \leq  \int_{t-\tau}^{t-\sigma} \dx_x(s-\tau) \, \d s
     + \int_{t-2\tau}^{t-\sigma} \int_{s}^{t-\sigma} \max_{k\in[N]} \left| \dot x_k(r) \right| \d r\d s.
\]
Now, definition \eqref{def:F} gives
\[
   F(t-\sigma) &\geq&  \beta\int_{[t-\sigma-2\tau]^+}^{t-\sigma}e^{-(t-\sigma-s)}\int_{s}^{t-\sigma}\max_{i\in[N]}\lvert \dot{x}_i(r)\rvert \d r\d s \\
   &\geq& 
   \beta e^{-2\tau} \int_{t-2\tau}^{t-\sigma} \int_{s}^{t-\sigma}\max_{i\in[N]}\lvert \dot{x}_i(r)\rvert \d r\d s,
\]
where we used the fact that for $t\geq 2\tau$ we have $[t-\sigma-2\tau]^+ \leq t-2\tau$.
Consequently,
\[
   \int_{t-\tau}^{t-\sigma} \left| \dot x_i(s) \right| \d s &\leq&  \int_{t-\tau}^{t-\sigma} \dx_x(s-\tau) \, \d s
      + \beta^{-1} e^{2\tau} F(t-\sigma).
\]
Inserting the above estimate into \eqref{x_i_x_j} gives the desired claim \eqref{est_xi_xj}.
\end{proof}

Finally, we estimate the values of the Lyapunov-Krasovskii functional \eqref{def:F} in the time interval
$[0,2\tau]$. 
For this purpose we define, for $K\in\N$,
\(   \label{def:DD}
   \DD^K_x := \max_{i,j \in [N]} \max_{t\in [-\tau,K\sigma]} \max_{s\in [(K-1)\sigma,K\sigma]}\lvert x_j(t) - x_i(s) \rvert.
\)
Moreover, we recall the definition \eqref{def:DD0} of $\DD^0_x$,
\[
   \DD^0_x := \max_{i,j \in [N]} \max_{s, t\in [-\tau,0]}\lvert x^0_i(s) - x^0_j(t) \rvert.
\]
	
	\begin{lem}
	We have, for all $K\in\N$, the recurrence
	\(   \label{appB:recurrence}
	     \DD^K_x \leq \DD^0_x + (1+\sigma) \DD^{K-1}_x + \sigma \sum_{k=0}^{K-1} \DD^{k}_x.
	\)
	\end{lem}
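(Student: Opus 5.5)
Fix $K\in\N$ and indices $i,j\in[N]$, times $t\in[-\tau,K\sigma]$ and $s\in[(K-1)\sigma,K\sigma]$. The goal is to bound $\lvert x_j(t)-x_i(s)\rvert$ by the right-hand side of \eqref{appB:recurrence}.

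The plan is to anchor both points at time $-\tau$, where the initial datum lives. We use the triangle inequality
\[
\lvert x_j(t)-x_i(s)\rvert \leq \lvert x_j(t)-x_j(\min\{t,0\})\rvert + \lvert x_j(\min\{t,0\}) - x_i(\min\{s,0\})\rvert + \lvert x_i(\min\{s,0\})-x_i(s)\rvert .
\]
The middle term is at most $\DD^0_x$. The first and third terms are increments over positive times, which we control by integrating \eqref{hkp}.

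The key pointwise estimate comes directly from \eqref{hkp} and \eqref{bound:a}: for $r\in(0,K\sigma]$,
\[
\lvert\dot x_l(r)\rvert \leq \max_{m}\lvert x_m(r-\tau)-x_l(r-\sigma)\rvert .
\]
If $r\in((k-1)\sigma,k\sigma]$ with $1\le k\le K$, then $r-\sigma\in((k-2)\sigma,(k-1)\sigma]$ and $r-\tau\in[-\tau,(k-1)\sigma]$. Hence $\lvert\dot x_l(r)\rvert\leq \DD^{k-1}_x$, where we interpret $\DD^{0}_x$ as the initial quantity (consistent with the definition \eqref{def:DD} for $K=0$ when $r-\sigma\le 0$). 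Integrating over $[0,K\sigma]$ in slabs of length $\sigma$ gives
\[
\lvert x_l(\theta)-x_l(0)\rvert \leq \sigma\sum_{k=0}^{K-1}\DD^k_x
\]
for any $\theta\in[0,K\sigma]$.

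The difficulty is that applying this naively to both the $t$-side and the $s$-side produces the sum twice, while the target has only one sum plus $(1+\sigma)\DD^{K-1}_x$. So the $s$-side must be treated more cleverly. Since $s\in[(K-1)\sigma,K\sigma]$, I would write
\[
x_i(s) - x_j(t) = \bigl(x_i(s)-x_i((K-1)\sigma)\bigr) + \bigl(x_i((K-1)\sigma) - x_j(t)\bigr).
\]
The first piece is an integral over a single slab, so it is at most $\sigma\DD^{K-1}_x$.

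For the second piece we split according to the position of $t$:
\begin{itemize}
\item If $t\le (K-1)\sigma$, the second piece is directly at most $\DD^{K-1}_x$, because $(K-1)\sigma\in[(K-2)\sigma,(K-1)\sigma]$ and $t\in[-\tau,(K-1)\sigma]$. The total is then at most $(1+\sigma)\DD^{K-1}_x$.
\item If $t\in((K-1)\sigma,K\sigma]$, we go through time $0$ instead. Write $\lvert x_i((K-1)\sigma)-x_j(t)\rvert \le \lvert x_i((K-1)\sigma) - x_j(t-?)\rvert$; more simply, bound $\lvert x_j(t)-x_j(0)\rvert\le \sigma\sum_{k=0}^{K-1}\DD^k_x$ and $\lvert x_j(0)-x_i((K-1)\sigma)\rvert\le \DD^{K-1}_x$, the latter because $0\in[-\tau,(K-1)\sigma]$. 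This gives at most $\sigma\sum_{k}\DD^k_x + (1+\sigma)\DD^{K-1}_x$.
\end{itemize}
In both cases the bound is within the right-hand side of \eqref{appB:recurrence}. The extra $\DD^0_x$ term covers the base case $K=1$, where $\DD^{K-1}_x$ must be replaced by the initial-datum quantity and $s,t$ may be negative.

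The main obstacle is bookkeeping at the boundary. This includes the case $K=1$ and the case $t<0$ or $(K-1)\sigma<0$ issues, as well as checking that $\DD^{k-1}_x$ really dominates $\lvert x_m(r-\tau)-x_l(r-\sigma)\rvert$. That last check requires $r-\tau\ge -\tau$ and $r-\tau\le (k-1)\sigma$, which holds since $\tau\ge\sigma$. For $K=1$, I would use $\DD^0_x$ directly to bound increments on $[0,\sigma]$, where both delayed arguments are nonpositive. The stated form, with its generous $\DD^0_x$ term, should absorb this.
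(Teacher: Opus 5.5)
Your argument is correct and is essentially the paper's: both bound $\lvert\dot x_l\rvert$ on the slab $[(k-1)\sigma,k\sigma]$ by $\DD^{k-1}_x$ (using $\sigma\le\tau$), then apply the triangle inequality through $x_i((K-1)\sigma)$ and $x_j(0)$. The only difference is your case split for $t\le (K-1)\sigma$, where you bound $\lvert x_i((K-1)\sigma)-x_j(t)\rvert$ directly by $\DD^{K-1}_x$ instead of always passing through $x_j(0)$ as the paper does; this actually shows the $\DD^0_x$ term is superfluous, so your closing remark that it is needed for $K=1$ is inaccurate, and you should delete the abandoned first decomposition and the unfinished fragment in your second bullet.
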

	
	\begin{proof} 
	Let us recall that $\sigma\leq \tau$.
	For $K\in\N$ we define
	\(   \label{def:M}
	   \MM^K := \max_{i\in[N]} \max_{t\in [(K-1)\sigma,K\sigma]}\lvert \dot x_i(t) \rvert,
	\)
	and claim that $\MM^K \leq \DD^{K-1}_x$, for all $K\in\N$.
	Indeed, we observe that for any $t\in [0, \sigma]$ and any $i\in[N]$ we have from \eqref{hkp}, \eqref{bound:a} and \eqref{def:DD0},
	\(    \label{eq:appB1}
             \lvert \dot x_{i}(t) \rvert \leq \underset{j:j\neq i}{\sum} a_{ij}(t) \lvert x_{j}(t-\tau)-x_{i}(t-\sigma) \rvert \leq \DD^0,
	\)
	from which $\MM^1 \leq \DD^0_x$ immediately follows.
	Moreover, let us fix any $K\in\N$, $K\geq 2$.
	For any $t\in [(K-1)\sigma,K\sigma]$ we have
	$t-\tau \in [-\tau, (K-1)\sigma]$ and $t-\sigma \in [(K-2)\sigma, (K-1)\sigma]$, so that for any $i\in[N]$,
	\[
             \lvert \dot x_{i}(t) \rvert \leq \underset{j:j\neq i}{\sum} a_{ij}(t) \lvert x_{j}(t-\tau)-x_{i}(t-\sigma) \rvert \leq \DD^{K-1}_x,
	\]
	which gives $\MM^K \leq \DD^{K-1}_x$.
		
	Now, for any $i,j \in [N]$ and $t\in [-\tau,K\sigma]$, $s\in [(K-1)\sigma,K\sigma]$ we write
	\(  \label{AppB:expansion}
	   \lvert x_j(t) - x_i(s) \rvert \leq \lvert x_j(t) - x_j(0) \rvert + \lvert x_j(0) - x_i((K-1)\sigma) \rvert + \lvert x_i((K-1)\sigma) - x_i(s) \rvert.
	\)
	If $t\leq 0$, then we readily have $\lvert x_j(t) - x_j(0) \rvert \leq \DD^0_x$. On the other hand, if $t>0$, using \eqref{def:M} we estimate 
	\[
	    \lvert x_j(t) - x_j(0) \rvert \leq \int_0^t \lvert \dot x_j(r)\rvert \d r \leq \sigma \sum_{k=1}^K \MM^{k} \leq \sigma \sum_{k=0}^{K-1} \DD^{k}_x.
	\]
	We therefore write
	\[
	   \lvert x_j(t) - x_j(0) \rvert \leq \DD^0_x + \sigma \sum_{k=0}^{K-1} \DD^{k}_x.
	\]
	The second term of the right-hand side of \eqref{AppB:expansion} is estimated directly from the definition \eqref{def:DD} by $\DD^{K-1}_x$,
	while for the third term we again use \eqref{def:M},
	\[
	    \lvert x_i((K-1)\sigma) - x_i(s) \rvert \leq \int_{(K-1)\sigma}^s \lvert \dot x_i(u)\rvert \d u \leq \sigma \MM^{K} \leq \sigma \DD^{K-1}_x.
	\]
	In summary, we have
	\[
	      \lvert x_j(t) - x_i(s) \rvert \leq  \DD^0_x + (1+\sigma) \DD^{K-1}_x + \sigma \sum_{k=0}^{K-1} \DD^{k}_x.
	\]
	which readily gives \eqref{appB:recurrence}.
	\end{proof}
		
	For resolving the recurrence \eqref{appB:recurrence}, we define the comparison sequence $(Z^k_\sigma)_{K\in\N}$
	with $Z^0_\sigma:=1$ and
	\(   \label{recc:ZK}
	     Z^K_\sigma = Z^0_\sigma + (1+\sigma) Z^{K-1}_\sigma + \sigma \sum_{k=0}^{K-1} Z^{k}_\sigma, \qquad\mbox{for } K\in\N.
	\)
	A simple induction argument shows that $\DD^K_x \leq Z^K_\sigma \DD^0_x$, for all $K\in\N\cup\{0\}$.
	Moreover, by subtracting the expressions for $Z^k_\sigma$ and $Z^{K-1}$, we obtain the second-order recurence
	\[
	    Z^K_\sigma = (1+\sigma) \left( 2 Z^{K-1}_\sigma - Z^{K-2}_\sigma \right), \qquad\mbox{for } K\geq 2,
	\]
	with $Z^0_\sigma=1$, $Z^1_\sigma = 2(1+\sigma)$. This is easily resolved as
	\[
	    Z^K_\sigma = \frac{1}{2\sqrt{\sigma(1+\sigma)}} \left[ \left( (1+\sigma) + \sqrt{\sigma(1+\sigma)} \right)^{K+1} - \left( (1+\sigma) - \sqrt{\sigma(1+\sigma)} \right)^{K+1} \right].
	\]
	
We thus have the following result.

\begin{lem}\label{lem:LyapBound}
Let $K := \lceil 2\tau/\sigma \rceil$, i.e., the smallest integer such that $K\sigma \geq 2\tau$. 
Then,
\(  \label{eq:LyapBound}
    F(t) \leq \mathcal{Z}^K_\sigma \DD^0_x,  \qquad\mbox{ for } t\in[0,2\tau],
\)
with $\mathcal{Z}^K_\sigma$ given by \eqref{def:Z}.
\end{lem}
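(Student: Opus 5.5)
We need to bound $F(t)=d_x(t)+\beta\int_{[t-2\tau]^+}^t e^{-(t-s)}\int_s^t \max_i|\dot x_i(r)|\,\d r\,\d s$ for $t\in[0,2\tau]$ by $Z^K_\sigma\DD^0_x + Z^{K-1}_\sigma\beta(1-(1+2\tau)e^{-2\tau})\DD^0_x$. The plan is to bound the two terms of $F$ separately, using the quantities $\DD^k_x$, $\MM^k$ from the preceding lemma together with the comparison bound $\DD^k_x\le Z^k_\sigma\DD^0_x$. Throughout I use that $Z^k_\sigma$ is nondecreasing in $k$, which is immediate from \eqref{recc:ZK} since all terms are nonnegative.

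\emph{First term.} For $t\in[0,2\tau]\subset[0,K\sigma]$, pick $k\in\{1,\dots,K\}$ with $t\in[(k-1)\sigma,k\sigma]$. Then $t\in[-\tau,K\sigma]$ and $t\in[(k-1)\sigma,k\sigma]$, so definition \eqref{def:DD} gives
\[
|x_j(t)-x_i(t)|\le \DD^k_x\le Z^k_\sigma\DD^0_x\le Z^K_\sigma\DD^0_x .
\]
(The definition of $\DD^k_x$ uses the range $[-\tau,k\sigma]$ for the first time argument, which contains $t$.) Taking the maximum over $i,j$ yields $d_x(t)\le Z^K_\sigma\DD^0_x$.

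\emph{Second term.} The proof of the recurrence lemma showed $\MM^k\le\DD^{k-1}_x$. Hence for $r\in[0,2\tau]\subset[0,K\sigma]$ we have
\[
\max_i|\dot x_i(r)|\le \max_{k\le K}\MM^k\le \DD^{K-1}_x\le Z^{K-1}_\sigma\DD^0_x .
\]
Since $[t-2\tau]^+\ge 0$, all $r$ appearing in the integral lie in $[0,t]\subset[0,2\tau]$. Thus the second term is at most
\[
\beta Z^{K-1}_\sigma\DD^0_x\int_{[t-2\tau]^+}^t e^{-(t-s)}(t-s)\,\d s\le \beta Z^{K-1}_\sigma\DD^0_x\int_0^{2\tau}u e^{-u}\,\d u .
\]
The last integral is computed explicitly as $1-(1+2\tau)e^{-2\tau}$. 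Summing the two bounds gives \eqref{eq:LyapBound}.

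The main point to check is the first step. One must be sure that $\DD^k_x$ really controls $d_x(t)$ for $t$ in the $k$-th window, and that monotonicity of $Z^k_\sigma$ lets us pass to the index $K$. Both hold, so I expect no real obstacle; the case $\sigma=0$ (where $K$ is undefined) I would treat as excluded or as a limit, consistent with the paper's convention.
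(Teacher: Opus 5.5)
Your proof is correct and follows the paper's argument: you bound $d_x(t)$ by $\DD^k_x\le Z^k_\sigma\DD^0_x\le Z^K_\sigma\DD^0_x$, bound $\max_i|\dot x_i|$ on $[0,2\tau]$ via $\MM^k\le\DD^{k-1}_x$, and integrate $ue^{-u}$ over $[0,2\tau]$. One small fix: the intermediate step $\max_{k\le K}\MM^k\le\DD^{K-1}_x$ would require $\DD^k_x$ to be nondecreasing in $k$, which fails in general because the window for $s$ in \eqref{def:DD} shifts, so instead write $\max_{k\le K}\MM^k\le\max_{k\le K}Z^{k-1}_\sigma\DD^0_x\le Z^{K-1}_\sigma\DD^0_x$, as the paper does, using the monotonicity of $Z^k_\sigma$ you already noted.
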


\begin{proof}
	Noticing the monotonicity $Z^{m-1}_\sigma \leq Z^m_\sigma$, for all $m\in \mathbb{N}$, we have
	\[
	   \max_{i\in[N]} \max_{t\in [0,2\tau]}\lvert \dot x_i(t) \rvert \leq \max_{k\in [K]} M^k \leq \max_{k\in [K]} \DD^{k-1}_x \leq \max_{k\in [K]}Z_\sigma^{k-1} \DD^0_x\leq Z^{K-1}_\sigma \DD^0_x.
	\]
	Moreover, by definition \eqref{def:DD} we have $$d_x(t)\leq \max_{k\in [K]}\DD^k_x \leq \max_{k\in [K]} Z^k_\sigma \DD^0_x\leq Z^K_\sigma \DD^0_x,$$
	for all $t\in [0,2\tau]$. We therefore estimate the Lyapunov-Krasovskii functional \eqref{def:F} for $t\in [0,2\tau]$ by
	\[
		F(t) &=& d_x(t) + \beta\int_{0}^{2\tau}e^{-(2\tau-s)}\int_{s}^{2\tau}\underset{i\in[N]}{\max}\lvert \dot{x}_i(r)\rvert \,\d r\d s \\
		   &\leq& Z^K_\sigma \DD^0_x + Z^{K-1}_\sigma \DD^0_x\, \beta\int_0^{2\tau} e^{-(2\tau-s)} (2\tau - s) \,\d s \\
		   &\leq& Z^K_\sigma \DD^0_x + Z^{K-1}_\sigma \DD^0_x\, \beta \left( 1 - (1+2\tau) e^{-2\tau} \right) \\
		   &=& \mathcal{Z}^K_\sigma \DD^0_x.
	\]
\end{proof}

\subsection{Proof of Theorem~\ref{thm:consensus}} \label{subsec:consensus}
\begin{proof}
By continuity of the influence function $\psi=\psi(s)$, condition \eqref{condition} implies that
for any sufficiently small $\varepsilon>0$ we have
\(  \label{cond_eps}
   4\tau+\beta(1-e^{-2\tau}) < \psi\left( \left(1 + \tau-\sigma + \beta^{-1}e^{2\tau} \right) (\mathcal{Z}^K_\sigma+ \varepsilon) \DD^0_x\right),
\)
with $\mathcal{Z}^K_\sigma$ given by \eqref{def:Z}.
Let us consider the set
\[
   \mathcal{S} := \left\{t > 0:\; F(s) < (\mathcal{Z}^K_\sigma + \varepsilon)\DD^0_x\text{ for all } s\in [0,t) \right\},
\]
Note that bound \eqref{eq:LyapBound} gives $F(t) \leq \mathcal{Z}^K_\sigma \DD^0_x$, for all $t\in[0,2\tau]$.
Consequently, $\mathcal{S}\neq \emptyset$ and we denote $T:=\sup\mathcal{S}$.
Due to the continuity of the Lyapunov functional \eqref{def:F}, we obviously have $T>2\tau$.

We claim that $T = +\infty$. By contradiction, let us assume that $T <+\infty$.
Then, again by continuity, we have $F(t)< (\mathcal{Z}^K_\sigma + \varepsilon)\DD^0_x$, for all $t\in [0,T)$, and
\(   \label{for_contradiction}
	F(T) = (\mathcal{Z}^K_\sigma + \varepsilon) \DD^0_x.
\)
Lemma~\ref{lem:est_xi_xj} 
gives, for all $i,j\in[N]$ and $t\in [2\tau, T]$,
\[
   \left| x_j(t-\tau) - x_i(t-\sigma) \right| &\leq& \dx_x(t-\tau) +  \int_{t-\tau}^{t-\sigma} \dx_x(s-\tau) \, \d s
      + \beta^{-1} e^{2\tau} F(t-\sigma)   \\
      &\leq&
      \dx_x(t-\tau) + (\tau-\sigma) \max_{s\in [t-\tau, t-\sigma]} \dx_x(s-\tau) + \beta^{-1} e^{2\tau} F(t-\sigma) \\
      &\leq&
      \left(1 + \tau-\sigma + \beta^{-1}e^{2\tau} \right) (\mathcal{Z}^K_\sigma + \varepsilon) \DD^0_x,
\]
where we used the bound $d_x(t) \leq F(t) \leq (\mathcal{Z}^K_\sigma + \varepsilon) \DD^0_x$, for all $t\in [0,T]$.
Therefore, with the monotonicity of the influence function $\psi=\psi(s)$, we have, for all $t\in [2\tau, T]$,
\[  
   \underline{a}(t) = \min_{i,j\in[N]} a_{ij}(t) \geq \frac{1}{N-1} \psi \left( \left(1 + \tau-\sigma + \beta^{-1}e^{2\tau} \right) (\mathcal{Z}^K_\sigma + \varepsilon) \DD^0_x\right).
\]
We set
\[
   \alpha:=4\tau,\qquad  \gamma := \beta\left(1 - e^{-2\tau}\right), 
   \qquad \eta:= \psi \left( \left(1 + \tau-\sigma + \beta^{-1}e^{2\tau} \right) (\mathcal{Z}^K_\sigma + \varepsilon) \DD^0_x\right).
\]
Then from \eqref{cond_eps} we have $\alpha+\gamma<\eta$.
Moreover, \eqref{noextraterm} gives
\[
    \frac{\d}{\d t}F(t)\leq \frac{\alpha}{\tau}\int_{t-\tau}^{t}F(s-\tau) \,\d s-\eta F(t)+\gamma F(t-\tau),\qquad\text{for almost all }  t\in (2\tau,T).
\]
An application of the Halanay inequality, Lemma~\ref{prop:Halanay}, gives then
\[
   F(t) \leq \left( \max_{s\in [0, 2\tau]} F(s) \right) e^{-\Gamma(t-2\tau)}\leq \mathcal{Z}^K_\sigma \DD^0_xe^{-\Gamma(t-2\tau)}, \qquad \text{for all } t\in [2\tau, T],
\]
where $\Gamma>0$ is the unique solution of \eqref{Gamma}. In particular, we have
\[
   F(T) \leq \mathcal{Z}^K_\sigma \DD^0_x e^{-\Gamma(T-2\tau)} < \mathcal{Z}^K_\sigma \DD^0_x,
\]
which is a contradiction to \eqref{for_contradiction}.
We thus conclude that $T=+\infty$. The limit passage $\varepsilon\to 0+$ gives then $F(t)\leq \mathcal{Z}^K_\sigma \DD^0_x $, for all $t\geq 0$. Thus, repeating the above argument, due to \eqref{condition} we obtain by the Halanay Lemma~\ref{prop:Halanay}
\[
   d_x(t) \leq F(t) \leq \mathcal{Z}^K_\sigma \DD^0_x e^{-\Gamma(t-2\tau)},  \qquad\text{for all } t\geq 2\tau,
\]
where $\Gamma>0$ is the unique solution of \eqref{Gamma}. This concludes the proof.
\end{proof}

\section{Asymptotic flocking in system \eqref{CS1}--\eqref{CS:incond}}\label{sec:flocking}
\setcounter{equation}{0}

Throughout this section let $\{x_i,v_i\}_{i\in[N]}$ be a global solution of \eqref{CS1}--\eqref{CS2} with the initial datum \eqref{CS:incond}.
For a constant $\beta>0$ we introduce the Lyapunov-Krasovskii functional $G: \RR^+ \rightarrow \RR+$,
\(   \label{def:G}
	G(t) :=  
		d_v(t) + \beta\int_{[t-2\tau]^+}^{t}e^{-(t-s)}\int_{s}^{t}\underset{i\in[N]}{\max}\lvert \dot{v}_i(r)\rvert \,\d r\d s,
\)
with the velocity diameter $d_v=d_v(t)$ given by \eqref{def:dv}.

The following three lemmas are obtained by very slight modifications
of their counterparts established in Sections~\ref{subsec:aux} and~\ref{subsec:Lyap}.
Their proofs are obtained mostly by replacing agents' positions and position diameters
by agents' velocities and velocity diameters, respectively.
We therefore omit their proofs.

\begin{lem}\label{lem:Gder}
	Assume that
	\[
	   2\beta e^{-2\tau}-4\tau-\beta\geq 0.
	\]
	Then, for almost all $t> 2\tau$,
	\begin{equation}\label{G:noextratermCS}
		\frac{\d}{\d t} G(t) \leq 4\int_{t-\tau}^{t}G(s-\tau) \,\d s - (N-1) \underline{a}(t)G(t) + \beta \left(1-e^{-2\tau}\right) G(t-\tau),
	\end{equation}
	where $\underline{a}(\cdot)$ is defined in \eqref{underlinea}.
\end{lem}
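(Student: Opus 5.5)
The argument is a transcription of Section~\ref{sec:consensus}, with $x_i$ replaced by $v_i$ and $d_x$ by $d_v$. The point is that \eqref{CS2} has exactly the structure of \eqref{hkp} in the variable $v$. The only difference is that the weights $a_{ij}$ depend on positions, not velocities. Every estimate in Lemmas~\ref{lem:aux1}, \ref{lem:aux2}, Corollary~\ref{cor:aux} and Lemma~\ref{Fder} uses only two facts about the weights: positivity and the bound \eqref{bound:a}. It never uses how $a_{ij}$ depends on the state, and \eqref{bound:a} still holds because $\lVert\psi\rVert_{L^\infty}\le 1$.

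\emph{Analogue of Lemma~\ref{lem:aux1}.} For $t\ge\tau$, I would show
\[
\max_i|\dot v_i(t)|\le d_v(t-\tau)+\int_{t-\tau}^{t-\sigma}\max_i|\dot v_i(s)|\,\d s.
\]
To get this, I split $v_j(t-\tau)-v_i(t-\sigma)=(v_j(t-\tau)-v_i(t-\tau))+(v_i(t-\tau)-v_i(t-\sigma))$ and use $\sum_{j\neq i}a_{ij}\le1$. Since $v_i$ is $C^1$ for $t>0$ and Lipschitz, the second difference equals the integral of $\dot v_i$.

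\emph{Analogue of Lemma~\ref{lem:aux2} and Corollary~\ref{cor:aux}.} Next I would decompose $[2\tau,\infty)$ into intervals on which $d_v$ is attained by a fixed pair $(i,k)$. On each interval I compute $\frac12\frac{\d}{\d t}d_v^2=\langle v_i-v_k,\dot v_i-\dot v_k\rangle$ and split it into the terms $(\RI_1),(\RI_2),(\RI_3)$ and $(\RII_1),(\RII_2),(\RII_3)$, exactly as before. The inner-product sign arguments give $-N\underline a(t)d_v(t)^2$; these only use that $|v_j-v_k|\le d_v$. The delay terms are bounded by the velocity analogue of \eqref{estonderiv}. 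Enlarging the integration domains as in Corollary~\ref{cor:aux} then gives
\[
\frac{\d}{\d t}d_v\le 4\int_{t-\tau}^t d_v(s-\tau)\,\d s+4\tau\int_{t-2\tau}^t\max_l|\dot v_l|-N\underline a\,d_v .
\]

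\emph{Differentiating $G$.} I would differentiate $G$ for $t>2\tau$ exactly as in Lemma~\ref{Fder}. This produces $-\beta e^{-2\tau}\int_{t-2\tau}^t\max|\dot v|$, then $-\beta\times$(the double integral), then $+\beta(1-e^{-2\tau})\max|\dot v(t)|$. I bound the last term by the analogue of \eqref{estonderiv}. The coefficient of $\int_{t-2\tau}^t\max|\dot v|$ then becomes $4\tau-2\beta e^{-2\tau}+\beta\le0$, which is exactly the assumption, so that term is dropped. Using $(N-1)\underline a\le1$, I absorb $-N\underline a\,d_v-\beta(\text{double integral})\le-(N-1)\underline a\,G$. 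Finally $d_v\le G$ gives \eqref{G:noextratermCS}.

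\emph{Main obstacle.} The delicate point is regularity. In the first-order case $\dot x_i$ is continuous; here $\dot v_i$ may jump at $t=0$, and $\underline a(t)$ involves positions. Both issues are harmless. All estimates are used only for $t>2\tau$, where the arguments $t-\tau$ and $t-\sigma$ lie in $(0,\infty)$ and every function involved is continuous. The position dependence of $\underline a$ does not matter because it only enters as a coefficient.
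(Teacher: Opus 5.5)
Your proposal is correct and follows the route the paper indicates. The paper omits this proof, saying it is obtained by replacing $x_i$, $d_x$ with $v_i$, $d_v$ in Lemmas~\ref{lem:aux1}, \ref{lem:aux2}, Corollary~\ref{cor:aux} and Lemma~\ref{Fder}; your computation of $\frac{\d}{\d t}G$, the use of $4\tau-2\beta e^{-2\tau}+\beta\le 0$, and the absorption via $(N-1)\underline{a}\le 1$ reproduce that argument faithfully.

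One harmless inaccuracy: in the first-order case $\dot x_i$ can also jump at $t=0$, since $x^0_i$ is only assumed continuous. So the regularity situation is the same in both settings, not a new difficulty of the second-order case.
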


\begin{lem}\label{lem:GBound}
Let $K := \lceil 2\tau/\sigma \rceil$, i.e., the smallest integer such that $K\sigma \geq 2\tau$. 
Then,
\(  \label{eq:GBound}
    G(t) \leq \mathcal{Z}^K_\sigma \DD^0_v \qquad\mbox{ for } t\in[0,2\tau],
\)
with $\mathcal{Z}^K_\sigma$ given by \eqref{def:Z} and $\DD^0_v$ given by \eqref{def:DDv0}.
\end{lem}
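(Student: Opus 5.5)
The plan is to follow the proof of Lemma~\ref{lem:LyapBound} verbatim, with positions replaced by velocities. The key observation is that the velocity equation \eqref{CS2} has exactly the same structure as \eqref{hkp}: $\dot v_i(t)=\sum_{j\neq i}a_{ij}(t)(v_j(t-\tau)-v_i(t-\sigma))$. The only change is that the weights $a_{ij}$ now depend on positions rather than velocities. This is harmless, since the argument only uses the upper bound \eqref{bound:a}, $a_{ij}\le 1/(N-1)$, which holds because $\lVert\psi\rVert_{L^\infty}\le 1$.

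First, for $K\in\N$, I will define the velocity analogues
\[
\DD^K_v := \max_{i,j\in[N]}\max_{t\in[-\tau,K\sigma]}\max_{s\in[(K-1)\sigma,K\sigma]}\lvert v_j(t)-v_i(s)\rvert,
\qquad
\MM^K_v:=\max_{i\in[N]}\max_{t\in[(K-1)\sigma,K\sigma]}\lvert\dot v_i(t)\rvert .
\]
I will show $\MM^K_v\le \DD^{K-1}_v$ exactly as before. For $t\in[(K-1)\sigma,K\sigma]$ we have $t-\tau\in[-\tau,(K-1)\sigma]$ and $t-\sigma\in[(K-2)\sigma,(K-1)\sigma]$. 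Hence \eqref{CS2} together with \eqref{bound:a} gives $\lvert\dot v_i(t)\rvert\le \DD^{K-1}_v$, and for $K=1$ the bound is by $\DD^0_v$.

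Next, I will use the triangle splitting \eqref{AppB:expansion} for $v$, going through $v_j(0)$ and $v_i((K-1)\sigma)$. This yields the same recurrence as \eqref{appB:recurrence},
\[
\DD^K_v\le \DD^0_v+(1+\sigma)\DD^{K-1}_v+\sigma\sum_{k=0}^{K-1}\DD^k_v .
\]
Comparison with \eqref{recc:ZK} by induction then gives $\DD^k_v\le Z^k_\sigma\DD^0_v$ for all $k$.

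Finally, I will bound $G$ on $[0,2\tau]$. Since $K\sigma\ge 2\tau$ and $Z^k_\sigma$ is monotone in $k$, we get $\max_{[0,2\tau]}\max_i\lvert\dot v_i\rvert\le Z^{K-1}_\sigma\DD^0_v$ and $d_v(t)\le Z^K_\sigma\DD^0_v$ on $[0,2\tau]$. In the integral term of \eqref{def:G} with $t\le 2\tau$, the inner integral is at most $(t-s)Z^{K-1}_\sigma\DD^0_v$, and the lower limit of the outer integral is $0$. This gives
\[
\beta\int_0^t e^{-(t-s)}(t-s)\,\d s\,Z^{K-1}_\sigma\DD^0_v\le \beta\left(1-(1+2\tau)e^{-2\tau}\right)Z^{K-1}_\sigma\DD^0_v,
\]
because $\int_0^t re^{-r}\,\d r$ is increasing in $t$. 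Summing the two bounds yields \eqref{eq:GBound}.

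The only delicate point is the case $\sigma=0$, where $K$ is not defined. There I would interpret the statement via the limit $\sigma\to0^+$, or note that then $\dot v_i$ is bounded directly by the delayed diameter. Otherwise I expect no real obstacle: the derivation never uses the mean, a bound on $\lvert v_i\rvert$, or the form of $\psi$ beyond $\psi\le1$.
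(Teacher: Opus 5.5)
Your proposal is correct and is essentially the paper's argument: the paper omits this proof, saying it is obtained from Lemma~\ref{lem:LyapBound} and the recurrence \eqref{appB:recurrence} by replacing positions with velocities and using only \eqref{bound:a}, which is exactly what you carry out step by step. Two small remarks. Your handling of the integral term for $t<2\tau$, via monotonicity of $t\mapsto\int_0^t re^{-r}\,\d r$, is slightly cleaner than the paper's displayed computation. The $\sigma=0$ aside is unnecessary, since $K$ is only defined for $\sigma>0$, and the limit interpretation would give nothing anyway: for $\tau>0$ one has $\mathcal{Z}^K_\sigma\geq Z^K_\sigma\geq K+1\to+\infty$ as $\sigma\to0^+$.
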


\begin{lem} \label{lem:est_vi_vj}
For all $i,j\in[N]$ and $t\geq 2\tau$, we have  
\(   \label{est_vi_vj}
	\left| v_j(t-\tau) - v_i(t-\sigma) \right| \leq d_v(t-\tau) +  \int_{t-\tau}^{t-\sigma} d_v(s-\tau) \, \d s
	+ \beta^{-1} e^{2\tau} G(t-\sigma).
\)
\end{lem}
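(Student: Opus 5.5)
The plan is to repeat verbatim the proof of Lemma~\ref{lem:est_xi_xj}, with positions replaced by velocities and $F$ replaced by $G$.

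\textbf{Step 1: a bound on $\dot v_i$.} First we establish the velocity analogue of Lemma~\ref{lem:aux1}: for all $t\geq\tau$,
\[
  \max_{i\in[N]}|\dot v_i(t)| \leq d_v(t-\tau) + \int_{t-\tau}^{t-\sigma}\max_{k\in[N]}|\dot v_k(r)|\,\d r .
\]
This follows from \eqref{CS2} exactly as before. Split $v_j(t-\tau)-v_i(t-\sigma)$ as $(v_j(t-\tau)-v_i(t-\tau)) + (v_i(t-\tau)-v_i(t-\sigma))$. Then use \eqref{bound:a}, i.e.\ $\sum_{j\neq i}a_{ij}\leq 1$, which holds because $\|\psi\|_\infty\leq1$ and the rates are defined by \eqref{commrates}. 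Finally bound the second increment by the integral of $|\dot v_i|$ over $[t-\tau,t-\sigma]$, which is legitimate since $t-\tau\geq 0$.

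\textbf{Step 2: the triangle inequality.} For $t\geq 2\tau$ we write
\[
  |v_j(t-\tau)-v_i(t-\sigma)| \leq d_v(t-\tau) + \int_{t-\tau}^{t-\sigma}|\dot v_i(s)|\,\d s .
\]
For $s\in[t-\tau,t-\sigma]$ we have $s\geq\tau$, so Step~1 applies and gives
\[
  \int_{t-\tau}^{t-\sigma}|\dot v_i(s)|\,\d s \leq \int_{t-\tau}^{t-\sigma}d_v(s-\tau)\,\d s + \int_{t-\tau}^{t-\sigma}\int_{s-\tau}^{s-\sigma}\max_k|\dot v_k(r)|\,\d r\,\d s .
\]

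\textbf{Step 3: enlarging the double integral.} Shift the variable by $\tilde s=s-\tau$. The inner domain becomes $[\tilde s,\tilde s+\tau-\sigma]$, which is contained in $[\tilde s,t-\sigma]$. The outer domain $[t-2\tau,t-\tau-\sigma]$ is contained in $[t-2\tau,t-\sigma]$. The double integral is therefore at most
\[
  \int_{t-2\tau}^{t-\sigma}\int_{\tilde s}^{t-\sigma}\max_k|\dot v_k(r)|\,\d r\,\d\tilde s .
\]

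\textbf{Step 4: comparison with $G(t-\sigma)$.} From \eqref{def:G} with $t\geq2\tau$ we have $[t-\sigma-2\tau]^+\leq t-2\tau$, and on $[t-2\tau,t-\sigma]$ the weight satisfies $e^{-(t-\sigma-s)}\geq e^{-2\tau}$. Hence
\[
  G(t-\sigma) \geq \beta e^{-2\tau}\int_{t-2\tau}^{t-\sigma}\int_s^{t-\sigma}\max_k|\dot v_k|\,\d r\,\d s .
\]
Inverting this bounds the double integral from Step~3 by $\beta^{-1}e^{2\tau}G(t-\sigma)$. Combining Steps~2--4 yields \eqref{est_vi_vj}.

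There is no real obstacle; the only point needing care is the bookkeeping in Steps~3 and~4. One must check that all times involved are nonnegative, so that the dynamics \eqref{CS2} and not the initial datum govern $\dot v$. One must also check that the integration domains nest inside the support $[\,[t-\sigma-2\tau]^+,t-\sigma]$ of the functional $G$. Both hold precisely because $t\geq 2\tau$ and $\sigma\leq\tau$.
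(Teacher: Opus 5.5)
Your proposal is correct and follows the paper's route. The paper omits this proof, saying it is Lemma~\ref{lem:est_xi_xj} with positions, $d_x$ and $F$ replaced by velocities, $d_v$ and $G$; that is exactly what you carry out, through the velocity analogue of Lemma~\ref{lem:aux1}, the triangle inequality, the shift $\tilde s=s-\tau$, and the comparison with $G(t-\sigma)$ via $[t-\sigma-2\tau]^+\leq t-2\tau$.
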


We now establish the following bound on $\lvert x_j(t-\tau)-x_i(t-\sigma) \rvert$.

\begin{lem}\label{lem:CS:xi_xj}
For all $i,j\in[N]$ and $t\geq 2\tau$, we have  
\(  \label{CS:xi_xj}
		\begin{array}{l}
			\vspace{0.3cm}\displaystyle{\lvert x_j(t-\tau)-x_i(t-\sigma) \rvert \leq \lvert x_j(\tau)-x_i(2\tau-\sigma) \rvert} \\
			\displaystyle{\hspace{1cm} + \int_{2\tau}^{t}\left(G(s-\tau)+\beta^{-1} e^{2\tau}G(s-\sigma)+\int_{s-\tau}^{s-\sigma} G(r-\tau)\,\d r 
				\right) \,\d s.}
		\end{array}
\)
\end{lem}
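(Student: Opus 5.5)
We fix $i,j\in[N]$ and write $x_j(t-\tau)-x_i(t-\sigma)$ as its value at $t=2\tau$ plus the integral of its time derivative. Since $\dot x_k=v_k$ by \eqref{CS1}, for $t\geq 2\tau$
\[
   x_j(t-\tau)-x_i(t-\sigma) = x_j(\tau)-x_i(2\tau-\sigma) + \int_{2\tau}^{t} \bigl( v_j(s-\tau) - v_i(s-\sigma) \bigr)\,\d s .
\]
Taking norms and using the triangle inequality reduces the claim to the pointwise bound
\[
   \lvert v_j(s-\tau)-v_i(s-\sigma)\rvert \leq G(s-\tau)+\beta^{-1}e^{2\tau}G(s-\sigma)+\int_{s-\tau}^{s-\sigma}G(r-\tau)\,\d r
   \qquad\mbox{for all } s\geq 2\tau .
\]

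This pointwise bound follows directly from Lemma~\ref{lem:est_vi_vj}, applied at time $s\geq 2\tau$:
\[
   \lvert v_j(s-\tau)-v_i(s-\sigma)\rvert \leq d_v(s-\tau)+\int_{s-\tau}^{s-\sigma}d_v(r-\tau)\,\d r+\beta^{-1}e^{2\tau}G(s-\sigma).
\]
By definition \eqref{def:G}, the double integral in $G$ is nonnegative, so $d_v(u)\leq G(u)$ for every $u\geq 0$. For $s\geq 2\tau$ and $r\in[s-\tau,s-\sigma]$ we have $s-\tau\geq\tau\geq 0$ and $r-\tau\geq s-2\tau\geq 0$, so every argument lies in $\RR^+$, where $G$ is defined. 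We may therefore replace $d_v(s-\tau)$ by $G(s-\tau)$ and $d_v(r-\tau)$ by $G(r-\tau)$, since $\sigma\leq\tau$ makes the integration interval correctly oriented. Integrating over $s\in[2\tau,t]$ then gives \eqref{CS:xi_xj}.

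There is no real obstacle here; the only point needing care is Lemma~\ref{lem:est_vi_vj} itself. Its proof is omitted in the paper as the velocity analogue of Lemma~\ref{lem:est_xi_xj}, and it needs the velocity version of Lemma~\ref{lem:aux1}, namely $\max_i\lvert\dot v_i(t)\rvert\leq d_v(t-\tau)+\int_{t-\tau}^{t-\sigma}\max_i\lvert\dot v_i\rvert$ for $t\geq\tau$. This follows verbatim from \eqref{CS2} and \eqref{bound:a}, because the $a_{ij}$ still sum to at most one. With that lemma taken as given, as the statement allows, the proof is the short integration argument above.
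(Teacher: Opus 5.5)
Your proof is correct and follows the paper's argument: you reduce the claim to the pointwise bound from Lemma~\ref{lem:est_vi_vj}, replace $d_v$ by $G$, and integrate over $(2\tau,t)$. The only difference is cosmetic. You integrate the vector identity and then apply the triangle inequality, whereas the paper differentiates $\tfrac12\lvert x_j(t-\tau)-x_i(t-\sigma)\rvert^2$ to obtain, for almost every $t$, $\frac{\d}{\d t}\lvert x_j(t-\tau)-x_i(t-\sigma)\rvert\leq\lvert v_j(t-\tau)-v_i(t-\sigma)\rvert$. Your version has the small advantage of never differentiating the norm where it vanishes.
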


\begin{proof}
For all $i,j\in[N]$ and $t> 2\tau$, we have with the Cauchy--Schwarz inequality,
\[
   \frac{1}{2}\frac{\d}{\d t}\lvert x_j(t-\tau)-x_i(t-\sigma) \rvert^2 &=& \langle x_j(t-\tau)-x_i(t-\sigma),v_j(t-\tau)-v_i(t-\sigma)\rangle  \\
	&\leq& \lvert x_j(t-\tau)-x_i(t-\sigma)\rvert \lvert  v_j(t-\tau)-v_i(t-\sigma)\rvert.
\]
Therefore, for almost all $t> 2\tau$,
\(    \label{eq:dxv}
   \frac{\d}{\d t}\lvert x_j(t-\tau)-x_i(t-\sigma) \rvert\leq  \lvert  v_j(t-\tau)-v_i(t-\sigma)\rvert,
\)
and with \eqref{est_vi_vj},
\[
    \frac{\d}{\d t} \lvert x_j(t-\tau)-x_i(t-\sigma) \rvert
       &\leq& d_v(t-\tau) +  \int_{t-\tau}^{t-\sigma} d_v(s-\tau) \, \d s + \beta^{-1} e^{2\tau} G(t-\sigma)  \\
       &\leq& G(t-\tau) +  \int_{t-\tau}^{t-\sigma} G(s-\tau) \, \d s + \beta^{-1} e^{2\tau} G(t-\sigma),
\]
where we used the fact that, by definition \eqref{def:G} of $G$, $d_v(s)\leq G(s)$, for all $s>0$.
Integrating the above estimate over the interval $(2\tau, t)$ we obtain \eqref{CS:xi_xj}.
\end{proof}

We now define, for $K\in\N$,
\(   \label{def:DDvK}
   \DD_v^K := \max_{i,j \in [N]} \max_{t\in [-\tau,K\sigma]} \max_{s\in [(K-1)\sigma,K\sigma]}\lvert v_j(t) - v_i(s) \rvert.
\)

\begin{lem}\label{lem:CS:bndxtau}
Let $K := \lceil 2\tau/\sigma \rceil$, i.e., the smallest integer such that $K\sigma \geq 2\tau$. 
Then,
\(   \label{CS:bndxtau}
   \lvert x_j(\tau)-x_i(2\tau-\sigma) \rvert \leq \DD_x^0 + \mathcal{W}^K_\sigma \DD_v^0,
\)
with $\mathcal{W}^K_\sigma$ given by \eqref{def:W}, $\DD_x^0$ by \eqref{def:DD0}, and $\DD^0_v$ defined in \eqref{def:DDv0}.
\end{lem}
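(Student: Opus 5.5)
We propose to bound $\lvert x_j(\tau)-x_i(2\tau-\sigma)\rvert$ by splitting the difference into initial positions plus integrated velocities. Concretely, we write
\[
x_j(\tau)-x_i(2\tau-\sigma) = \bigl(x_j(0)-x_i(0)\bigr) + \int_0^{\tau}\bigl(v_j(r)-v_i(r)\bigr)\,\d r - \int_\tau^{2\tau-\sigma} v_i(r)\,\d r .
\]
The first term is bounded by $\DD^0_x$ through \eqref{def:DD0}. The remaining terms combine to $\int_0^\tau v_j(r)\,\d r-\int_0^{2\tau-\sigma}v_i(r)\,\d r$. This is not a difference of velocities over equal time lengths when $\sigma<\tau$, so a single absolute velocity would be left over. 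To avoid that, we reparametrize: $\int_0^{2\tau-\sigma}v_i = \int_{\sigma-\tau}^{\tau} v_i(r+\tau-\sigma)\,\d r$. We then pair $v_j(r)$ with $v_i(r+\tau-\sigma)$ on $[0,\tau]$, and handle the extra piece $\int_{\sigma-\tau}^0$ by shifting it into the initial interval, where it is absorbed into the definition of $\DD^0_x$ through $x_i(2\tau-\sigma)$ versus $x_i$ at earlier times. Equivalently, we can compare $x_j(\tau)-x_j(0)$ and $x_i(2\tau-\sigma)-x_i(\tau-\sigma)$, both integrals of length $\tau$, with the anchor difference $x_j(0)-x_i(\tau-\sigma)$. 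Since $\tau-\sigma\ge0$, this anchor is not purely initial data. We therefore choose instead the anchors $x_j(0)$ and $x_i(0)$ and accept an integral of length $\tau$ for $j$ and one of length $2\tau-\sigma$ for $i$. The velocity differences are then controlled by the step quantities $\DD^k_v$ from \eqref{def:DDvK}.

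The key step is the velocity analogue of the recurrence \eqref{appB:recurrence}: $\DD^K_v\le Z^K_\sigma\DD^0_v$. This is proved exactly as in the position case, with \eqref{CS2} replacing \eqref{hkp}. We also have $\max_i\lvert\dot v_i\rvert\le \DD^{k-1}_v$ on $[(k-1)\sigma,k\sigma]$. On each subinterval $[(k-1)\sigma,k\sigma]$ we bound $\lvert v_j(r)-v_i(s)\rvert\le\DD^k_v\le Z^k_\sigma\DD^0_v$. Summing over the $K$ intervals covering $[0,2\tau]$ gives a bound of the form $\sigma\sum_{k=1}^{K}Z^k_\sigma\DD^0_v$ for the velocity contribution.

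Finally, we identify this sum with $\mathcal W^K_\sigma$. From \eqref{recc:ZK}, $\sigma\sum_{k=0}^{K-1}Z^k_\sigma = Z^K_\sigma-1-(1+\sigma)Z^{K-1}_\sigma$. The definition $\mathcal W^K_\sigma = Z^K_\sigma-(1+\sigma)(Z^{K-1}_\sigma+1)$ differs from this by $-\sigma$, which matches dropping the $k=0$ term with $Z^0_\sigma=1$ weighted by $\sigma$. So the target is exactly $\lvert\text{velocity part}\rvert\le\sigma\sum_{k=1}^{K-1}Z^k_\sigma\DD^0_v$. Our plan is therefore to arrange the splitting so that the intervals $[(k-1)\sigma,k\sigma]$ are charged with $\DD^{k-1}_v$, giving $\sigma\sum_{k=1}^{K}\DD^{k-1}_v$ minus one term. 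We expect the main obstacle to be this careful bookkeeping. The unequal integration lengths ($\tau$ versus $2\tau-\sigma\le K\sigma$) must be handled so that no stray absolute velocity term appears, and the indices must produce exactly $\mathcal W^K_\sigma$ rather than a cruder bound. If exact matching fails, we would fall back on bounding each $\lvert v_j(r)-v_i(s)\rvert$ by the appropriate $\DD^{k-1}_v$ using its $\max_{t\in[-\tau,K\sigma]}$ structure, which allows comparing different times.
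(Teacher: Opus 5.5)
There is a genuine gap: the decomposition you finally commit to (anchors $x_j(0)$ and $x_i(0)$) cannot give the stated bound. It leaves $\int_0^\tau v_j(r)\,\d r-\int_0^{2\tau-\sigma}v_i(r)\,\d r$. However you pair these two integrals, a single velocity integrated over an interval of length $\tau-\sigma$ (say $\int_0^{\tau-\sigma}v_i$) remains unpaired. That term is not controlled by $\DD^0_v$ at all. If all initial velocities equal a constant $V$, then $\DD^0_v=0$, the velocities stay equal to $V$ for all times, and the term equals $(\tau-\sigma)V$. Bounding it through $\DD^0_x$, on top of the anchor term $\lvert x_j(0)-x_i(0)\rvert\le\DD^0_x$, pushes the coefficient of $\DD^0_x$ above $1$. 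Your other anchor pair $x_j(0)$, $x_i(\tau-\sigma)$ fails for the reason you give.

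The missing idea is to shift the $j$-anchor \emph{backward} rather than the $i$-anchor forward. Because $\DD^0_x$ compares positions at different initial times, you may anchor at $x_j(\sigma-\tau)$ and $x_i(0)$. Both are initial data, since $\sigma-\tau\in[-\tau,0]$. The paper does exactly this. It integrates $\frac{\d}{\d t}\lvert x_j(t-\tau)-x_i(t-\sigma)\rvert\le\lvert v_j(t-\tau)-v_i(t-\sigma)\rvert$, which is valid for all $t>0$ thanks to \eqref{CS:incomp}, over $t\in(\sigma,2\tau)$:
\begin{equation*}
\lvert x_j(\tau)-x_i(2\tau-\sigma)\rvert\le \lvert x_j(\sigma-\tau)-x_i(0)\rvert+\int_0^{2\tau-\sigma}\lvert v_j(s-\tau+\sigma)-v_i(s)\rvert\,\d s .
\end{equation*}
Both integrals now have length $2\tau-\sigma$, and no stray term appears. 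Your first reparametrization was heading this way. However, the piece to absorb into initial data is $\int_{\sigma-\tau}^0 v_j$, which turns $x_j(0)$ into $x_j(\sigma-\tau)$; it has nothing to do with $x_i(2\tau-\sigma)$.

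The index bookkeeping also needs correcting. The time $s$ of $v_i$ ranges over $[0,2\tau-\sigma]$, not $[0,2\tau]$. Since $K\sigma\ge 2\tau$ gives $2\tau-\sigma\le (K-1)\sigma$, only the $K-1$ intervals $[(k-1)\sigma,k\sigma]$, $k=1,\dots,K-1$, are needed. On each of them the correct charge is $\DD^k_v$, as you wrote at one point. Indeed, for $s\in[(k-1)\sigma,k\sigma]$ one has $s-\tau+\sigma\in[-\tau,k\sigma]$, which are exactly the ranges in \eqref{def:DDvK}. Your later plan of charging these intervals with $\DD^{k-1}_v$ is unjustified, because $\DD^{k-1}_v$ only covers $s\in[(k-2)\sigma,(k-1)\sigma]$. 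Using $\DD^k_v\le Z^k_\sigma\DD^0_v$ you obtain $\sigma\sum_{k=1}^{K-1}Z^k_\sigma\DD^0_v$. You correctly showed via \eqref{recc:ZK} that this equals $\mathcal{W}^K_\sigma\DD^0_v$. The term you wanted to ``drop'' disappears because the range consists of $K-1$ intervals, not through an index shift.
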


\begin{proof}
We note that due to \eqref{CS:incomp}, estimate \eqref{eq:dxv} in fact holds for all $t>0$.
Integrating it over the time interval $(\sigma,2\tau)$ yields
\[
   \lvert x_j(\tau)-x_i(2\tau-\sigma) \rvert \leq  \lvert x_j(-\tau+\sigma) - x_i(0) \rvert + \int_0^{2\tau-\sigma} \lvert v_j(s-\tau+\sigma)-v_i(s) \rvert \,\d s.
\]
Since $-\tau+\sigma\in [-\tau,0]$,
definition \eqref{def:DD0} gives $\lvert x_j(-\tau+\sigma) - x_i(0) \rvert \leq \DD_x^0$.
Moreover, $2\tau-\sigma\leq (K-1)\sigma$. Therefore, 
\[
\begin{split}
	\lvert x_j(\tau)-x_i(2\tau-\sigma) \rvert &\leq\DD_x^0 + \int_0^{(K-1)\sigma} \lvert v_j(s-\tau+\sigma)-v_i(s) \rvert \,\d s\\
	&=\DD_x^0  + \sum_{k=1}^{K-1}\int_{(k-1)\sigma}^{k\sigma} \lvert v_j(s-\tau+\sigma)-v_i(s) \rvert \,\d s.
\end{split}
\]
For all $k\in[K]$ and for all $s\in [(k-1)\sigma,k\sigma]$, we have $ s-\tau+\sigma \in [-\tau, k\sigma]$. Hence, by definition \eqref{def:DDvK},
\[
   \lvert v_j(s-\tau+\sigma)-v_i(s) \rvert \leq \DD_v^k, \qquad\mbox{for all } s\in [(k-1)\sigma, k\sigma], 
\]
for all $k\in [K]$. Then, using $\DD_v^k \leq Z^k_\sigma \DD_v^0$, we have
\[
   \sum_{k=1}^{K-1}\int_{(k-1)\sigma}^{k\sigma} \lvert v_j(s-\tau+\sigma)-v_i(s) \rvert \,\d s \leq \sigma \DD_v^0 \sum_{k=1}^{K-1} Z^k_\sigma.
\]
Finally, we use the recursive formula \eqref{recc:ZK} to conclude
\[
   \sigma \sum_{k=1}^{K-1} Z^k_\sigma = \sigma \left(\sum_{k=0}^{K-1} Z^k_\sigma - 1 \right) = Z^K_\sigma - (1+ \sigma)\left( Z^{K-1}_\sigma + 1\right) = \mathcal{W}^K_\sigma.
\]
\end{proof}

\subsection{Proof of Theorem~\ref{thm:flocking}}\label{subsec:flocking}

\begin{proof}
Let us denote, for $t\geq 2\tau$,
\[
   D_x(t) := \max_{i,j\in [N]} \, \lvert x_j(t-\tau)-x_i(t-\sigma) \rvert.
\]
Moreover, we define the set
\[
   \mathcal{S} := \left\{t > 2\tau:\, D_x(s) < \DD^0_x + \mathcal{W}_\sigma^K \DD^0_v + \frac{e^{C\tau}}{C} \left(1 + \beta^{-1}e^{2\tau + C\sigma} + e^{C\tau} (\tau-\sigma)\right) \mathcal{Z}^K_\sigma \DD^0_v,
      \text{ for all } s\in [2\tau,t) \right\},
\]
with $C>0$ given by \eqref{CS:condition}, $\mathcal{Z}^K_\sigma$ defined in \eqref{def:Z} and $\mathcal{W}^K$ in \eqref{def:W}.
Lemma~\ref{lem:CS:bndxtau} gives
\(  \label{Dx2tau}
   D_x(2\tau) \leq \DD^0_x + \mathcal{W}_\sigma^K \DD^0_v.
\)
Since $\DD^0_v > 0$ (otherwise the system is already in the flocking state initially),
due to the continuity of the particle trajectories, we have $\mathcal{S}\neq \emptyset$.
We denote $T:=\sup\mathcal{S}$ and observe that $T>2\tau$.

We claim that $T = +\infty$. By contradiction, let us assume that $T <+\infty$.
Then, again by continuity, we have
\(   \label{CS:for_contradiction}
	D_x(T) = \DD^0_x + \mathcal{W}^K \DD^0_v + \frac{e^{C\tau}}{C} \left(1 + \beta^{-1}e^{2\tau + C\sigma} + e^{C\tau} (\tau-\sigma)\right) \mathcal{Z}^K_\sigma \DD^0_v.
\)
Let us denote
\[
   \eta := \psi \left( \DD^0_x + \mathcal{W}_\sigma^K \DD^0_v + \frac{e^{C\tau}}{C} \left(1 + \beta^{-1}e^{2\tau + C\sigma} + e^{C\tau} (\tau-\sigma)\right) \mathcal{Z}^K_\sigma \DD^0_v \right).
\]
Then, due to the monotonicity of the influence function $\psi=\psi(s)$, we have for all $t\in [2\tau, T]$,
\[  
   \underline{a}(t) = \min_{i,j\in[N]} a_{ij}(t) \geq \frac{1}{N-1} \eta.
\]
Lemma~\ref{lem:Gder} gives then
\[
    \frac{\d}{\d t} G(t)\leq \frac{\alpha}{\tau}\int_{t-\tau}^{t}G(s-\tau) \,\d s-\eta G(t)+\gamma G(t-\tau),\qquad\text{for almost all }  t\in (2\tau,T),
\]
with $\alpha:=4\tau$ and $\gamma:=\beta\left(1 - e^{-2\tau}\right)$.
Note that assumption \eqref{CS:condition} reads
\[
         e^{C\tau}(4\tau e^{C\tau} + \beta(1-e^{-2\tau})) + C \leq \eta,
\]
and this readily implies $\alpha+\gamma = 4\tau + \beta\left(1 - e^{-2\tau}\right) < \eta$.
We may therefore apply the Halanay inequality, Lemma~\ref{prop:Halanay}, to obtain
\[
   G(t) \leq \left( \max_{s\in [0, 2\tau]} G(s) \right) e^{-\Gamma(t-2\tau)}\leq \mathcal{Z}^K_\sigma \DD^0_v\, e^{-\Gamma(t-2\tau)}, \qquad \text{for all } t\in [2\tau, T],
\]
where we used the bound provided by Lemma~\ref{lem:GBound} for the second inequality. Note that, due to \eqref{eq:GBound}, the above inequality holds true also for $t\in[0,2\tau]$.
\\Now, the constant $\Gamma>0$ is given by \eqref{Gamma}, therefore,
\[
   e^{C\tau}(4\tau e^{C\tau} + \beta(1-e^{-2\tau})) + C \leq \eta =
   e^{\Gamma\tau}(4\tau e^{\Gamma\tau} + \beta(1-e^{-2\tau})) + \Gamma,
\]
and, by monotonicity, $C\leq\Gamma$. Consequently,
\(   \label{Gt}
   G(t) \leq \mathcal{Z}^K_\sigma \DD^0_v\, e^{-C(t-2\tau)}, \qquad \text{for all } t\in [2\tau, T].
\)
Now, Lemma~\ref{lem:CS:xi_xj} gives, for all $t\in [2\tau, T]$,
\[
   D_x(t) \leq D_x(2\tau) + \int_{2\tau}^{t}\left(G(s-\tau)+\beta^{-1} e^{2\tau}G(s-\sigma)+\int_{s-\tau}^{s-\sigma} G(r-\tau)\,\d r  \right) \,\d s.
\]
Using \eqref{Gt}, we estimate the integral term from above by
\[
   && \left[ \frac{e^{C\tau}}{C} \left(1 - e^{-C(t-2\tau)}\right) + \frac{\beta^{-1} e^{2\tau}}{C} \left( e^{C\sigma} - e^{-C(t-2\tau-\sigma)}\right) \right. \\
     && \quad \left. + \frac{e^{2C\tau}}{C^2} \left( 1 - e^{-C(t-2\tau)} + e^{-C(t-\tau-\sigma)} - e^{-C(\tau-\sigma)} \right) \right] \mathcal{Z}^K_\sigma \DD^0_v \\
     && \quad <
   \frac{e^{C\tau}}{C} \left(1 + \beta^{-1}e^{2\tau + C\sigma} + e^{C\tau} (\tau-\sigma) \right) \mathcal{Z}^K_\sigma \DD^0_v,
\]
where we used the fact that $\sigma\leq\tau$ and $1 - e^{-C(\tau-\sigma)} \leq C(\tau-\sigma)$.
Therefore, with \eqref{Dx2tau} we have
\[
   D_x(T) < \DD^0_x + \mathcal{W}_\sigma^K \DD^0_v + \frac{e^{C\tau}}{C} \left(1 + \beta^{-1}e^{2\tau + C\sigma} + e^{C\tau} (\tau-\sigma)\right) \mathcal{Z}^K_\sigma \DD^0_v,
\]
which is a contradiction to \eqref{CS:for_contradiction}.
We conclude that $T=+\infty$. Consequently, repeating the above argument and applying the Halanay Lemma \ref{prop:Halanay}, we get, using that $d_v(t)\leq G(t)$, for all $t\geq 0$,
\begin{equation}\label{decaydv}
	d_v(t) \leq G(t) \leq \mathcal{Z}^K_\sigma \DD^0_v\, e^{-C(t-2\tau)}, \qquad\mbox{for all } t \geq 2\tau.
\end{equation}
Hence, since one can easily show that $$d_x(t)\leq d_x(0)+\int_{0}^{t}d_v(s)\d s,\quad \forall t\geq 0,$$
using \eqref{decaydv} we conclude that
\[
   \sup_{t \geq 0} d_x(t) \leq d_x(0) + \int_0^{+\infty} d_v(s) \,\d s \leq d_x(0) +\frac{e^{2C\tau}}{C}\mathcal{Z}^K_\sigma \DD^0_v< +\infty.
\]
\end{proof}

	\bigskip
\noindent {\bf Acknowledgements.} {\small E. Continelli and C. Pignotti are members of Gruppo Nazionale per l’Analisi Matematica,
	la Probabilità e le loro Applicazioni (GNAMPA) of the Istituto Nazionale di
	Alta Matematica (INdAM). E. Continelli is supported by PRIN 2022  (2022W58BJ5) {\it PDEs and optimal control methods in mean field games, population dynamics and multi-agent models}. C. Pignotti is partially supported by PRIN 2022  (2022238YY5) {\it Optimal control problems: analysis, approximation, and applications} and by	PRIN-PNRR 2022 (P20225SP98) {\it Some mathematical approaches to climate change and its impacts}.}
\begin{appendices}

	\section{}\label{appA}
	\setcounter{equation}{0}
	\noindent
	We establish the following Gr\"{o}nwall--Halanay--type lemma \cite{Halanay}.
	It is a slight modification of the results found in \cite{ChoiH1, H:SIADS:21, H:JMAA:22}. 
	We give its proof here for readers' convenience.
	
	\begin{lem}\label{prop:Halanay}
		Let $\tau>0$ and the constants $\alpha,\gamma, \eta>0$ be such that $\alpha+\gamma<\eta$.
		Let $u:[0,+\infty)\rightarrow \mathbb{R}$ be a nonnegative continuous function,
		differentiable almost everywhere on $(2\tau,+\infty)$, that satisfies 
		\begin{equation}\label{eq:Halanay}
			\frac{\d}{\d t}u(t)\leq \frac{\alpha}{\tau}\int_{t-\tau}^{t}u(s-\tau)\,\d s+\gamma u(t-\tau)-\eta u(t),\qquad \text{for almost all }t>2\tau.
		\end{equation}
		Then,
		\begin{equation}\label{udecay}
			u(t)\leq \left(\max_{s\in [0,2\tau]}u(s)\right)e^{-\Gamma(t-2\tau)}, \qquad \text{for all } t\geq 2\tau,
		\end{equation}
		where $\Gamma\in (0,\eta)$ is the unique solution of the equation
		\begin{equation}\label{Gamma}
			e^{\Gamma\tau}(e^{\Gamma\tau}\alpha+\gamma) + \Gamma = \eta.
		\end{equation}

	\end{lem}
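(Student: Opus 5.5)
Existence and uniqueness of $\Gamma$ come first. Set $h(\Gamma) := e^{\Gamma\tau}(e^{\Gamma\tau}\alpha+\gamma)+\Gamma-\eta$. Then $h$ is continuous and strictly increasing on $[0,\infty)$, with $h(0)=\alpha+\gamma-\eta<0$ and $h(\eta) > \eta-\eta = 0$ (since $\alpha,\gamma>0$). Hence there is a unique root $\Gamma\in(0,\eta)$.

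The decay estimate is proved by a comparison (barrier) argument. Let $M:=\max_{s\in[0,2\tau]}u(s)$ and, for $\varepsilon>0$, define
\[
w(t):=(M+\varepsilon)e^{-\Gamma(t-2\tau)} \qquad\mbox{for } t\geq 0 .
\]
Since $w(t)\geq M+\varepsilon>u(t)$ on $[0,2\tau]$, we have $u<w$ on $[0,2\tau]$. We show $u<w$ on $[0,\infty)$. Suppose not, and let $T^*:=\inf\{t>2\tau: u(t)\geq w(t)\}$. Then $T^*<\infty$, $u(T^*)=w(T^*)$ and $u<w$ on $[0,T^*)$.

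Next we evaluate the right-hand side of \eqref{eq:Halanay} along $w$. For $t\leq T^*$ the delayed arguments satisfy $s-\tau\in[t-2\tau,t-\tau]\subset[0,T^*)$ when $t>2\tau$, so $u(s-\tau)\leq w(s-\tau)\leq w(t-2\tau)=e^{2\Gamma\tau}w(t)$, because $w$ is decreasing. Averaging over $s\in[t-\tau,t]$ gives
\[
\frac{\alpha}{\tau}\int_{t-\tau}^t u(s-\tau)\,\d s\leq \alpha e^{2\Gamma\tau}w(t), \qquad \gamma u(t-\tau)\leq \gamma e^{\Gamma\tau}w(t).
\]
Hence, for almost all $t\in(2\tau,T^*]$,
\[
\dot u(t)\leq \left(\alpha e^{2\Gamma\tau}+\gamma e^{\Gamma\tau}\right)w(t)-\eta u(t) = (\eta-\Gamma)w(t)-\eta u(t),
\]
by \eqref{Gamma}. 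Setting $z:=u-w$ and using $\dot w=-\Gamma w$, this becomes $\dot z(t)\leq -\eta z(t)$ for almost every such $t$.

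The contradiction follows from this differential inequality. Let $t_0:=\sup\{t<T^*: z(t)\leq -\delta\}$ for small $\delta>0$; alternatively, apply the inequality on $(2\tau,T^*)$ directly. The function $e^{\eta t}z(t)$ has almost-everywhere nonpositive derivative. Since $u$ is continuous and a.e. differentiable, we must assume, as the paper implicitly does, that $u$ is absolutely continuous, or more precisely that $e^{\eta t}z$ is the integral of its derivative. Then $e^{\eta t}z(t)$ is nonincreasing on $[2\tau,T^*]$, so $z(T^*)\leq e^{-\eta(T^*-2\tau)}z(2\tau)<0$. This contradicts $z(T^*)=0$. Therefore $u<w$ for all $t\geq0$, and letting $\varepsilon\to0$ yields \eqref{udecay}.

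The main obstacle is this last step. Passing from an a.e. differential inequality to monotonicity requires absolute continuity, which holds here because $F$ and $G$ are built from Lipschitz trajectories. The other delicate point is checking that all delayed arguments stay in $[0,T^*]$, which is exactly why the inequality is imposed only for $t>2\tau$.
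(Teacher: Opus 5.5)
Your proof is correct, and although it uses the same exponential barrier as the paper, it reaches the contradiction by a different mechanism. The paper takes $w(t)=Me^{-\Gamma(t-2\tau)}$ without a shift, so it must handle the case $u(2\tau)=M$ separately, by bounding $D^+u(2\tau)$ through \eqref{eq:Halanay}. At the first touching time $T$ it then compares $D_-u(T)\ge-\Gamma w(T)$ with the right-hand side of \eqref{eq:Halanay} evaluated at $T$, computing $\int_{T-\tau}^{T}w(s-\tau)\,ds$ exactly and using $(e^{x}-1)/x<e^{x}$ to make the inequality strict. Your $\varepsilon$-shift removes the boundary case and supplies the strictness through $z(2\tau)\le-\varepsilon$. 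That is why your cruder bound $w(s-\tau)\le w(t-2\tau)$, which reproduces \eqref{Gamma} with equality, is enough. And instead of a pointwise argument at the touching time, you integrate $\dot z\le-\eta z$ over all of $(2\tau,T^*)$. The paper's version is shorter, but it applies \eqref{eq:Halanay} at the single points $T$ and $2\tau$, about which an almost-everywhere hypothesis says nothing; moreover, $2\tau$ lies outside the range $t>2\tau$. Your version is rigorous once absolute continuity is assumed, and you are right that this assumption cannot be dropped. Let $c$ be the Cantor function rescaled to $[0,\tau]$ and extended by $1$, and set $u=0$ on $[0,2\tau]$, $u(t)=e^{-\eta(t-2\tau)}c(t-2\tau)$ for $t\ge2\tau$. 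This $u$ is continuous and satisfies \eqref{eq:Halanay} almost everywhere, since $\dot u=-\eta u$ a.e.\ and the delay terms are nonnegative. Yet $u(3\tau)>0=\max_{[0,2\tau]}u$. As you note, $F$ and $G$ are locally Lipschitz, so the paper's applications are unaffected. The quantity $t_0$ in your last step is never used and can be deleted.
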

	\begin{oss}
		Let us note that as soon as the condition $\alpha+\gamma<\eta$ is verified, equation \eqref{Gamma} indeed has a unique solution $\Gamma\in (0,\eta)$.
		Namely, for the continuous function
		$$f(x):=e^{x\tau}(e^{x\tau}\alpha+\gamma)+x -\eta, \qquad x\geq 0,$$
		we have $f(0)=\alpha+\gamma-\eta<0$ and $f(\eta)=e^{\eta\tau}(e^{\eta\tau}\alpha+\gamma)>0$. Therefore, there exists $\Gamma\in(0,\eta)$ such that $f(\Gamma)=0$.
		Finally, since $f$ is strictly monotonic,
		the solution is unique.
	\end{oss}

	\begin{proof}[Proof of Lemma~\ref{prop:Halanay}.]
	Let us define
	\[
	   M := \max_{s\in [0,2\tau]} u(s), \qquad w(t) := M e^{-\Gamma(t-2\tau)}, \quad\text{for } t\geq 2\tau,
	\]
	with $\Gamma\in (0,\eta)$ the unique solution of \eqref{Gamma}.	We either have $u(2\tau) < M = w(2\tau)$, then there exists $T>2\tau$ such that
	\(  \label{Halanay:uw}
	    u(t) < w(t), \quad\text{for all } t\in (2\tau, T).
	\)
	Or we have $u(2\tau) = M$, then from \eqref{eq:Halanay} 
	\[
	    D^+ u(2\tau) \leq \frac{\alpha}{\tau}M\tau+ \gamma M -\eta u(2\tau) = M (\alpha+\gamma - \eta),
	\]
	where we used that $u(t)\leq M$ for all $t\in [0,2\tau]$, and $D^+ u(2\tau)$ denotes the upper (right-hand) Dini derivative of $u=u(t)$ at $t=2\tau$.
	Then, since $\dot w(2\tau) = -M\Gamma$ and $-\Gamma = e^{\Gamma\tau}(e^{\Gamma\tau}\alpha+\gamma) - \eta > \alpha+\gamma-\eta$,
	\[
	   D^+ u(2\tau) < -\Gamma M=\dot w(2\tau),
	\]
	and again there exists $T>2\tau$ such that \eqref{Halanay:uw} holds.
	\\Now, let us denote with $$S:=\{t>2\tau:u(s)<w(s),\text{ for all }s\in (2\tau,t)\},$$
	and $T:=\sup S$. Then, from \eqref{Halanay:uw} $S\neq\emptyset$ and $T>2\tau$. We are going to prove that $T=+\infty$.

	For contradiction, let us assume that there exists $T<+\infty$ such that \eqref{Halanay:uw} holds and
	\(   \label{Halanay:contra}
	    u(T) = w(T). 
	\)
	Then, necessarily $ D_-u(T) \geq \dot w(T)$, where $D_-  u(T)$ denotes the lower (left-hand) Dini derivative of $u=u(t)$ at $t=T$, since otherwise we would contradict the maximality of $T$. So, since $\dot w(T) = -\Gamma w(T)$, we have
	\(   \label{Halanay:dotuw}
	    D_- u(T) \geq - \Gamma w(T).
	\)
	On the other hand, \eqref{eq:Halanay} combined with \eqref{Halanay:contra} and with the fact that $u(t)\leq w(t)$, for all $t\in [0,T]$, gives
	\[
	   D_- u(T)\leq \frac{\alpha}{\tau}\int_{T-\tau}^{T} w(s-\tau)\,\d s + \gamma w(T-\tau)-\eta w(T).
	\]
	Using the identities
	\[
	    \int_{T-\tau}^{T} w(s-\tau)\,\d s = \frac{w(T)}{\Gamma} e^{\Gamma\tau} \left( e^{\Gamma\tau} - 1 \right), \qquad
	    w(T-\tau) = e^{\Gamma\tau} w(T), 
	\]
	we arrive at
	\[
	    D_- u(T) \leq \left[ \frac{\alpha}{\Gamma\tau} e^{\Gamma\tau} \left( e^{\Gamma\tau} - 1 \right) + \gamma e^{\Gamma\tau} - \eta \right] w(T).
	\]
	Using the inequality $\frac{e^{\Gamma\tau}-1}{\Gamma\tau} < e^{\Gamma\tau}$ and \eqref{Gamma}, we have
	\[
	   \frac{\alpha}{\Gamma\tau} e^{\Gamma\tau} \left( e^{\Gamma\tau} - 1 \right) + \gamma e^{\Gamma\tau} - \eta
	   < \alpha e^{2\Gamma\tau} + \gamma e^{\Gamma\tau} - \eta
	   = -\Gamma,
	\]
	so that
	\[
	    D_- u(T) < - \Gamma w(T),
	\]
	which is a contradiction to \eqref{Halanay:dotuw}.
	We therefore conclude that $T=+\infty$, namely $u(t) \leq w(t)$, for all $t\geq 2\tau$, which is \eqref{udecay}.
	\end{proof}
	
\end{appendices}


\end{document}